\title{Bushy pseudocharacters and group actions on quasi-trees.}
\date{}
\author[Á.~Martínez-Pérez]{Álvaro Martínez-Pérez}
\address{Departamento de Geometría y Topología\\ Universidad Complutense de Madrid\\ Madrid 28040  Spain}
\email{alvaro\_martinez@mat.ucm.es}
\thanks{Partially supported by MTM-2009-07030.}
\thanks{The author would like to express his
gratitude to the mathematics department of the UIC for their hospitality 
and to Kevin Whyte for sugesting this work.}
\newtheorem{definicion}{Definition}[section]
\newtheorem{nota}[definicion]{Remark}
\newtheorem{prop}[definicion]{Proposition}
\newtheorem{lema}[definicion]{Lemma}
\newtheorem{teorema}[definicion]{Theorem}
\newtheorem{cor}[definicion]{Corollary}
\newcommand{\co}{\ensuremath{\colon}} 
\newcommand{\bz}{\ensuremath{\mathbb{Z}}} 
\newcommand{\br}{\ensuremath{\mathbb{R}}} 
\begin{document}

\begin{abstract} Given a group acting on a graph quasi-isometric to a tree, we give sufficent conditions for a pseudocharacter to be bushy. We relate this with the conditions studied by M. Bestvina and K. Fujiwara on their work on bounded cohomology and obtain some results on the space of pseudocharacters.
\end{abstract}

\maketitle
\tableofcontents

\section{Introduction}

If $G$ is a finitely presented group, then $f\co G\to \br$ is a \emph{quasi-homomorphism} or \emph{quasicharacter} if $f(\alpha)+f(\beta)-f(\alpha\beta)$ is bounded independent of $\alpha, \beta$.

If $G$ is a finitely presented group, then $f\co G\to \br$ is a \emph{pseudocharacter} if it has the following properties:

\begin{itemize}
\item $f(\alpha^n)=n\alpha$ for all $\alpha\in G$, $n\in \bz$.
\item $\delta f(\alpha,\beta)=f(\alpha)+f(\beta)-f(\alpha\beta)$ is bounded independent of $\alpha$, $\beta$.
\end{itemize}

Cleraly the constant map $f(G)=0$ is a trivial pseudocharacter. 

\begin{nota}\label{nota: pseudocharacter} Note that if $f$ is a quasicharacter and $\phi$ is given by $$\phi(g)=\lim_{n\to \infty}f(g^n)/n,$$ then $\phi$ is a pseudocharcter with $\phi-f$ bounded.
\end{nota}

Let $S$ be a finite generating set for $G$. If $\Gamma(G,S)$ is the Cayley graph associated to the generating set $S$, then $f$ can be extended affinely over the edges of $\Gamma(G,S)$. 

If $\phi\co \br_+ \to \Gamma(G,S)$ is an infinite ray, then the \emph{sign} of $\phi$ is 
$$\sigma(\phi)= \left\{ \begin{tabular}{l} $+1$ if $\lim_{t\to \infty} f\circ \phi(t)=\infty $ \\
$-1$ if $\lim_{t\to \infty} f\circ \phi(t)=-\infty $ \\
\ \ 0 otherwise
\end{tabular}
 \right.$$

If $w$ is some infinite word in the generators $S$, there is a path $\phi_w\co \br_+ \to \Gamma(G,S)$ beginning at 1 and realizing the word. Define $\sigma(w)=\sigma(\phi_w)$. If $g$ is a group element, let $\sigma(g)$ be the sign of $f(g)$. Notice that if we pick a word $w$ representing $g$ then $\sigma(www \cdots)=\sigma(w^\infty)=\sigma(g)$.

\begin{definicion} \[E(f,S)=\Big\{ w=w_1w_2... \, | \, w_i\in S\cup S^{-1} \mbox{ and } \sigma(w)\in \{+1,-1\}\Big\}/\sim. \]
$w=w_1w_2... \sim_C v=v_1,v_2...$ if $\sigma(w)=\sigma(v)$ and for all $D$ with $\sigma(w)D>C$  there is a word $d=d_1...d_n$ in the letters $S\cup S^{-1}$ such that:
\begin{itemize}
\item $w_pd=v_p$ in $G$ for some prefix $w_p$ of $w$ and some prefix $v_p$ of $v$,
\item $|f(w_pd_p)-D|\leq C$ for all prefixes $d_p$ of $d$.
\end{itemize}
The word $d$ will be referred to as a \emph{connecting word} and $w\sim v$ if $w\sim_C v$ for some $C$. This is an equivalence relation.
\end{definicion}

Since the set $E(f,S)$ is invariant under chage of generators, see \cite[2.3]{Man}, it can be denoted just by $E(f)$.

Let $f\co G \to \br$ be a pseudocharacter. $E(f)^+\subset E(f)$ denotes the set of positive elements of $E(f)$, and $E(f)^-\subset E(f)$ the set of negative elements. If $|E(f)|=2$, $f$ is said to be \emph{uniform}. If $|E(f)^+|=1$ or $|E(f)^-|=1$ but $f$ is not uniform, then $f$ is said to be \emph{unipotent}. Otherwise, $f$ is said to be \emph{bushy}.

This work is mainly based in two papers. The first one is due to M. Bestvina and K. Fujiwara, \cite{BF}. In the first part of that work they consider a group acting on a $\delta$-hyperbolic graph by isometries. There, they conclude a sequence of papers from the second author, see \cite{EF}, \cite{F1} and \cite{F2}, proving that if the action holds certain conditions (J. Manning called this a \emph{Bestvina-Fujiwara action}), then the dimension of the second bounded cohomology of $G$ as a vector space over $\br$ is the cardinal of the continuum.  

On the other hand, J. Manning proves in \cite{Man} two interesting results about pseudocharacters. In the first one the author proves that if for a given group $G$ there is a non-uniform pseudocharacter, then $G$ admits a cobounded quasi-action on a bushy tree. To do that, he also defines \emph{Bottleneck Property} characterizing when a metric space is quasi-isometric to a tree.

The second one relates the existence of a \emph{bushy} pseudocharacter with the conditions on the action studied in \cite{BF}:

\begin{prop}\cite[4.27]{Man} If $f\co G \to \br$ is a bushy pseudocharacter, then there is a Bestvina-Fujiwara action of $G$ on a quasi-tree.
\end{prop}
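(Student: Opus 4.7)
The plan is to produce an isometric action of $G$ on a quasi-tree together with two independent loxodromic elements satisfying the weak-properness condition characteristic of a Bestvina--Fujiwara action.

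First, I would promote the pseudocharacter into a geometric action. A bushy pseudocharacter is, in particular, non-uniform, so the earlier theorem of Manning cited just above the proposition gives a cobounded quasi-action of $G$ on a bushy simplicial tree $T$. This quasi-action can be upgraded to an honest isometric action on a quasi-tree $X$ by a standard construction: take $X$ to be a graph with vertex set $G$ and edges joining $g$ to $g'$ whenever the images $g\cdot t_0$ and $g'\cdot t_0$ lie within a fixed distance in $T$, for some base vertex $t_0$. Then $G$ acts on $X$ by left multiplication, $X$ is $G$-equivariantly quasi-isometric to $T$, and in particular $X$ is a quasi-tree.

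Next, I would extract two independent loxodromic elements from the bushy hypothesis. Because $|E(f)^+|\geq 2$ and $|E(f)^-|\geq 2$, one can choose $g_1,g_2\in G$ with $f(g_i)>0$ such that the infinite words $g_i^\infty$ represent inequivalent classes in $E(f)^+$ and, after possibly replacing each $g_i$ by a large power or conjugate, also $g_i^{-\infty}$ represent inequivalent classes in $E(f)^-$. By Remark \ref{nota: pseudocharacter} and the pseudocharacter identity $f(g^n)=nf(g)$, each $g_i$ translates along a quasi-axis in $X$ by an amount comparable to $f(g_i)>0$, so it acts loxodromically on $X$. Inequivalence of rays in $E(f)^{\pm}$ corresponds, via the definition of $\sim_C$, to the rays being pushed apart relative to the level sets of $f$ by arbitrarily large amounts; in the quasi-tree $X$ this forces the four endpoints $g_1^{\pm\infty},g_2^{\pm\infty}$ to be pairwise distinct, so the pair $\{g_1,g_2\}$ is independent.

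Finally, I would verify the WPD-type condition required by a Bestvina--Fujiwara action: for each loxodromic $g_i$ and each $\varepsilon>0$, only finitely many $h\in G$ move a sufficiently long segment of the axis of $g_i$ by at most $\varepsilon$. This is the main obstacle, since it does not follow formally from the quasi-tree geometry. My approach would be to argue by contradiction: an infinite family of such $h$, combined with coboundedness of the $G$-action on $X$, would allow one to construct connecting words in the sense of the definition of $\sim_C$ witnessing that two \emph{a priori} distinct classes of $E(f)^+$ actually coincide, contradicting bushiness of $f$. Once WPD is established, the quasi-tree $X$ with the isometric $G$-action and the independent WPD loxodromics $g_1,g_2$ fulfills all the defining properties of a Bestvina--Fujiwara action.
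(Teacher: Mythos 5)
This proposition is not proved in the paper at all: it is quoted from Manning, and the action in question is the specific one recalled in Section 4 (the graph $X$ with vertex set $G\times V$, $V$ the set of components of $f^{-1}(\bz+\frac12)$, on which $G$ acts by left multiplication on the first factor). Measured against that, your proposal has one serious misdirection and two smaller gaps.

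The serious one is in your third step. A Bestvina--Fujiwara action, as defined in this paper and as needed for \cite[4.27]{Man}, is a nonelementary action admitting independent hyperbolic elements $g_1\not\sim g_2$ for the relation of Definition \ref{def: sim}: no group element carries arbitrarily long oriented segments of a quasi-axis of $g_1$ to within a uniform distance of a quasi-axis of $g_2$ preserving the $g_2$-orientation. That is not WPD. WPD is a finiteness condition on coarse stabilizers, and it generally fails for Manning's action (the action need not be metrically proper; vertex stabilizers can be infinite), so the statement you set out to verify is typically false for the very action you need. The correct target is $g_1\not\sim g_2$, and the mechanism is the one your last sentences gesture at but aim at the wrong condition: $f$ is coarsely constant on orbits of points of $X$ and grows linearly along the axes, so a group element carrying long oriented segments of one axis close to the other would produce connecting words identifying two classes of $E(f)^+$ (equivalently, one uses the injection $E(f)\hookrightarrow\partial X$ of \cite[4.17]{Man}), contradicting bushiness.

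The smaller gaps: (i) your orbit graph with vertex set $G$ and edges when $d_T(A_g t_0,A_{g'}t_0)\le R$ is not preserved by left multiplication, since $A_h$ only quasi-preserves distances; this is exactly why Manning's edge condition quantifies existentially over translates ("there is some $h$ so that $hg(\tau)$ and $hg'(\tau')$ lie in the same component\dots"). (ii) Distinct classes in $E(f)^{\pm}$ are classes of arbitrary infinite words, and there is no a priori reason they are represented by periodic words $g^{\infty}$ for group elements $g$; producing two independent hyperbolic elements is genuine work, done via the cobounded action on the bushy tree and a ping-pong argument, not by merely selecting representatives of ends.
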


Herein we work in the oposite direction. In section 2, we give some sufficent conditions for the existence of non-uniform pseudocharacters. 

\begin{prop}[\ref{Prop: bornologous}]\label{Prop: bornologous-introd} Let $G$ be a group acting on a quasi-tree $X$. Let $g_1,g_2$ be two hyperbolic elements of $G$ such that $d_H(\Gamma_1(g_1,x_0,\gamma_1),\Gamma_2(g_2,x_0,\gamma_2))=\infty$. Then, if $h$ is a pseudocharacter such that 
$h(g_1)>0$ and $h(g_2)>0$ and $h$ is bornologous on the action
then, $g_1^\infty\not\sim g_2^\infty$ in $E(h)$.
\end{prop}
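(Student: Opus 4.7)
The plan is to proceed by contradiction. Assume $g_1^\infty\sim_C g_2^\infty$ for some $C>0$; I will deduce $d_H(\Gamma_1,\Gamma_2)<\infty$, contradicting the hypothesis. Fix an arbitrary $D>C$. By the definition of $\sim_C$ there exist a prefix $w_p$ of $\gamma_1^\infty$, a prefix $v_p$ of $\gamma_2^\infty$, and a connecting word $d=d_1\cdots d_n$ such that $w_p d=v_p$ in $G$ and $|h(w_p d_q)-D|\leq C$ for every prefix $d_q$ of $d$. In particular both endpoints satisfy $h(w_p),h(v_p)\in[D-C,D+C]$. Since $h(g_1)>0$ and $h$ is linear on powers of $g_1$ up to the bounded defect $\delta h$, the condition $h(w_p)\approx D$ forces $w_p$ to differ in the word metric by a bounded amount from $g_1^{n_1}$ with $n_1\approx D/h(g_1)$, so $w_p\cdot x_0$ lies within a uniform constant of the quasi-axis $\Gamma_1$. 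Symmetrically, $v_p\cdot x_0$ lies near $\Gamma_2$.

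The core step, and what I expect to be the main obstacle, is to use the bornologous hypothesis to convert the $h$-oscillation bound $2C$ along the Cayley subpath from $w_p$ to $v_p$ into a $d_X$-distance bound independent of $D$. Being \emph{bornologous on the action} should provide a control function $R\mapsto S(R)$ such that any two group elements joined by a Cayley-graph path on which $h$ varies by at most $R$ have orbit images within $S(R)$ in $X$. Verifying that the precise definition used in the paper gives exactly this implication is the delicate point; once it is in place, $d_X(w_p\cdot x_0,v_p\cdot x_0)\leq S(2C)$, a bound that depends only on $C$.

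Combining the two previous paragraphs, for every $D>C$ there is a point of $\Gamma_1$ at height $\approx D$ within a uniform constant of a point of $\Gamma_2$; using the symmetry of the equivalence relation $\sim$ (swapping the roles of $w$ and $v$), the same conclusion holds with $\Gamma_1$ and $\Gamma_2$ exchanged. Since each $\Gamma_i$ is coarsely parametrised by $h$-height tending to $+\infty$, every point of $\Gamma_1$ lies in a fixed neighbourhood of $\Gamma_2$ and vice versa, forcing $d_H(\Gamma_1,\Gamma_2)<\infty$, the required contradiction.
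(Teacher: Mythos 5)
The core step you yourself flag as ``the delicate point'' is exactly where the argument breaks, and it cannot be repaired. The paper's definition of \emph{bornologous on the action} is: for every $R>0$ there is $S>0$ such that $g'(x_0)\in B(g(x_0),R)$ implies $|h(g')-h(g)|\leq S$. That is an implication from closeness of orbit points in $X$ to closeness of $h$-values. Your argument needs the converse: that a Cayley path along which $h$ oscillates by at most $2C$ has orbit image of diameter bounded in terms of $C$, so that $d_X(w_p x_0, v_p x_0)\leq S(2C)$. Nothing in the hypotheses gives this; the band $h^{-1}[D-C,D+C]$ can have unbounded orbit image, and a connecting word may wander arbitrarily far in $X$ while keeping its $h$-values in such a band. (A separate smaller issue: even if the endpoints were close for each $D$, you would still need to know that the points $w_p x_0$ coarsely sweep out all of $\Gamma_1$ as $D$ varies, but this is minor compared to the main gap.)

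A second symptom of the same problem: your proof never uses that $X$ is a quasi-tree. If it were correct it would prove the statement for an arbitrary isometric action on an arbitrary metric space, and that statement is false. Take $G$ a closed hyperbolic surface group acting on $X=\mathbb{H}^2$ (the action is metrically proper, hence every pseudocharacter is bornologous on it), let $h\co G\to\br$ be a nonzero homomorphism (a pseudocharacter), and pick independent hyperbolic $g_1,g_2$ with $h(g_i)>0$: the axes are at infinite Hausdorff distance, yet a nonzero homomorphism is a uniform pseudocharacter, so $E(h)$ has only two elements and $g_1^\infty\sim g_2^\infty$. The paper's proof uses the Bottleneck Property essentially: it fixes the midpoint $m$ of a geodesic between low-height prefixes $w_p(x_0),v_p(x_0)$, shows that for $D_1$ large the two quasi-axes thereafter avoid $B(m,\Delta)$, and then takes a second connecting word at much larger height $D_2$; the resulting path from $w_p(x_0)$ to $v_p(x_0)$ must, by Bottleneck, pass within $\Delta$ of $m$, and this forces a point of the second connecting word (where $h\approx D_2$) to be metrically close to a point of the first (where $h\approx D_1$). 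Only at that moment is the bornologous hypothesis invoked, in its correct direction, to bound $D_2-D_1$ and reach a contradiction. You should restructure your argument around that mechanism rather than around a metric bound on $d_X(w_p x_0,v_p x_0)$.
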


\begin{cor}[\ref{Cor: metrically proper}]\label{Cor: metrically proper-introd} Let $G$ be a group acting by isometries on a quasi-tree $X$ so that the action is metrically proper. Let $g_1,g_2$ be two hyperbolic elements of $G$ such that $d_H(\Gamma_1(g_1,x_0,\gamma_1),\Gamma_2(g_2,x_0,\gamma_2))=\infty$. Then, if $h$ is a pseudocharacter such that 
$h(g_1)>0$ and $h(g_2)>0$ 
then, $g_1^\infty\not\sim g_2^\infty$ in $E(h)$.
\end{cor}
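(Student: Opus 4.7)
The plan is to deduce the corollary from Proposition~\ref{Prop: bornologous-introd} by showing that, once the action of $G$ on $X$ is metrically proper and by isometries, \emph{every} pseudocharacter $h\co G\to\br$ is automatically bornologous on the action. With that reduction in hand, the hypotheses on $g_1,g_2,h$ in the corollary become exactly those of the proposition, and $g_1^\infty\not\sim g_2^\infty$ in $E(h)$ follows at once.

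To establish the reduction, I would fix a basepoint $x_0\in X$ and, given $R>0$, take $g,g'\in G$ with $d_X(gx_0,g'x_0)\leq R$. Because $G$ acts by isometries, this is equivalent to $d_X(x_0,\gamma x_0)\leq R$ for $\gamma:=g^{-1}g'$, so $\gamma$ lies in the set
\[
F_R\;:=\;\{\gamma\in G\mid d_X(x_0,\gamma x_0)\leq R\},
\]
which is finite by metric properness. Hence $M_R:=\max_{\gamma\in F_R}|h(\gamma)|$ is a finite real number, and, writing $g'=g\gamma$, the defect identity gives $h(g')=h(g)+h(\gamma)-\delta h(g,\gamma)$, from which
\[
|h(g)-h(g')|\;\leq\;|h(\gamma)|+|\delta h(g,\gamma)|\;\leq\;M_R+D,
\]
where $D$ is a universal bound on the defect $\delta h$. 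This is precisely the statement that $h$ descends to a bornologous function with respect to the orbit pseudometric on $G$, i.e.\ that $h$ is bornologous on the action.

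The only genuinely delicate point is matching the conclusion above with the definition of \emph{bornologous on the action} used in the statement of Proposition~\ref{Prop: bornologous-introd}; the underlying estimate is a short consequence of metric properness together with the pseudocharacter defect inequality, so beyond that definitional bookkeeping I do not anticipate any serious obstacle.
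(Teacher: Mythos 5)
Your proposal is correct and follows essentially the same route as the paper: reduce to Proposition~\ref{Prop: bornologous-introd} by showing that a metrically proper isometric action forces every pseudocharacter to be bornologous on the action, using the finiteness of the set of group elements moving $x_0$ into a given ball. The paper simply takes $S=\max_{g'\in K}|h(g')-h(g)|$ over that finite set, while you route the estimate through $\gamma=g^{-1}g'$ and the defect inequality to get a bound uniform in $g$; this is a minor (and valid) variation, not a different argument.
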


\begin{cor}[\ref{Cor: nonelementary}] Consider a nonelementary action of a group $G$ on a quasi-tree $X$. If the action is metrically proper then every nonelementary pseudocharacter is bushy. 
\end{cor}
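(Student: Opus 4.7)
The plan is to reduce the Corollary directly to Corollary \ref{Cor: metrically proper-introd}, using the hypothesis that the action is nonelementary to supply, inside each of the sets $E(h)^+$ and $E(h)^-$, a pair of hyperbolic elements with independent quasi-axes. Once two distinct classes are exhibited in each of $E(h)^+$ and $E(h)^-$, the conclusion $|E(h)^+|\geq 2$ and $|E(h)^-|\geq 2$ means, by definition, that $h$ is bushy.

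First, I would use nonelementarity of the action to harvest many hyperbolic elements with independent axes. Classically, a nonelementary action on a hyperbolic-like space contains a pair of hyperbolic elements $g,h$ with independent quasi-axes, and from such a pair one produces via ping-pong an abundance of hyperbolic elements whose quasi-axes realize any two prescribed limit points chosen from a Cantor-like subset of the boundary. In particular, given finitely many independence constraints and finitely many sign constraints on a fixed nontrivial pseudocharacter, one can always find hyperbolic elements satisfying them.

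Second, I would translate the hypothesis that $h$ is nonelementary into the concrete statement: there exist hyperbolic $g_1,g_2\in G$ with $h(g_1)>0$, $h(g_2)>0$, and $d_H(\Gamma_1(g_1,x_0,\gamma_1),\Gamma_2(g_2,x_0,\gamma_2))=\infty$. The point is that if every pair of hyperbolic elements on which $h$ is positive had axes at finite Hausdorff distance, $h$ would be forced to live on an invariant quasi-line, making it essentially elementary. Given that we can also pick $g_i$ with $h(g_i)$ of a prescribed sign (by replacing $g_i$ with $g_i^{-1}$, which fixes the axis and flips the sign of $h$), the same argument supplies hyperbolic $g_3,g_4$ with $h(g_3)<0$, $h(g_4)<0$, and independent axes.

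Third, I apply Corollary \ref{Cor: metrically proper-introd} to $(g_1,g_2,h)$: since the action is metrically proper and $h(g_1),h(g_2)>0$, we conclude $g_1^\infty\not\sim g_2^\infty$ in $E(h)$, hence $|E(h)^+|\geq 2$. Applying the same corollary to $(g_3^{-1},g_4^{-1},h)$ — where now $h(g_3^{-1})=-h(g_3)>0$ and similarly for $g_4$, while the axes of $g_i^{-1}$ and $g_i$ coincide and hence remain at infinite Hausdorff distance — gives two distinct classes, this time negative, in $E(h)^-$. Therefore $h$ is neither uniform nor unipotent, so it is bushy.

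The main obstacle I anticipate is Step 2: making precise the sense in which "nonelementary pseudocharacter" yields positively-valued hyperbolic elements with independent axes (and, symmetrically, negatively-valued ones). Once this extraction is honest, Corollary \ref{Cor: metrically proper-introd} does all the remaining work, and the bushiness follows immediately from the definitions.
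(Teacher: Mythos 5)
Your overall strategy --- extract two independent hyperbolic elements on which $h$ is positive, feed them to Corollary \ref{Cor: metrically proper}, repeat with the signs reversed, and conclude $|E(h)^+|\geq 2$ and $|E(h)^-|\geq 2$ --- is exactly the route the paper intends; the corollary is stated without proof precisely because it is this reduction. One remark is cosmetic: your first two steps are much heavier than needed. The paper \emph{defines} a nonelementary pseudocharacter as one for which there exist independent hyperbolic $g_1,g_2$ with $h(g_1)\neq 0\neq h(g_2)$, and independence is defined by the quasi-axes containing no rays at finite Hausdorff distance, which immediately gives $d_H(\Gamma_1(g_1,x_0,\gamma_1),\Gamma_2(g_2,x_0,\gamma_2))=\infty$. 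So no ping-pong and no ``invariant quasi-line'' argument is required; after replacing $g_i$ by $g_i^{-1}$ where necessary (which preserves the quasi-axis as a set and flips the sign of $h$, since $h(g^{-1})=-h(g)$), you have the hypotheses of Corollary \ref{Cor: metrically proper} verbatim. Had the quasi-line heuristic actually been load-bearing it would be a gap, since ``$h$ would be forced to live on an invariant quasi-line'' is not an argument.

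The substantive issue is a sign error in your treatment of $E(h)^-$. Applying Corollary \ref{Cor: metrically proper} to $(g_3^{-1},g_4^{-1})$ with $h(g_3^{-1}),h(g_4^{-1})>0$ separates the classes $(g_3^{-1})^\infty$ and $(g_4^{-1})^\infty$, whose sign is $\sigma=+1$: these lie in $E(h)^+$, not in $E(h)^-$, so as written you have only proved $|E(h)^+|\geq 2$ a second time. One cannot convert a statement about positive classes into one about negative classes by inverting group elements; the negative classes are represented by $g_3^\infty,g_4^\infty$ with $h(g_3),h(g_4)<0$, i.e.\ by words $w$ with $h\circ\phi_w\to-\infty$. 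To separate those you must run the mirror image of Proposition \ref{Prop: bornologous}: either observe that its proof is symmetric under $h\mapsto -h$, or, more cleanly, apply Corollary \ref{Cor: metrically proper} to the pseudocharacter $-h$ (which is again nonelementary, with the action still metrically proper), noting that $E(-h)$ is canonically $E(h)$ with the two signs exchanged. This is the ``same argument'' step invoked at the end of the proof of Theorem \ref{Tma: bushy}. With that correction your proof closes and coincides with the paper's.
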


In section 3, we prove that given a Bestvina-Fujiwara action, it holds the conditions of Proposition \ref{Prop: bornologous}. Moreover, 

\begin{teorema}[\ref{Tma: bushy}]\label{Tma: bushy-introd} Let $G$ be a group acting on a quasi-tree. If it is a Bestvina-Fujiwara action, then there is a bushy pseudocharacter $h\co G \to \br$. 
\end{teorema}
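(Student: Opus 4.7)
The plan is to realize Theorem \ref{Tma: bushy-introd} as a direct consequence of the work announced for Section 3 plus Proposition \ref{Prop: bornologous-introd}. Roughly, a Bestvina--Fujiwara action furnishes the two hyperbolic elements and the bornologous pseudocharacter needed to apply that proposition; the proposition then separates two rays in $E(h)^+$ (and, symmetrically, in $E(h)^-$), which is the definition of bushy.

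First I would extract from the hypotheses of a Bestvina--Fujiwara action two independent hyperbolic isometries $g_1,g_2\in G$ whose quasi-axes $\Gamma(g_i,x_0,\gamma_i)$ sit at infinite Hausdorff distance in the quasi-tree $X$. This uses the non-commensurability/independence condition built into \cite{BF}: independent hyperbolic elements in a $\delta$-hyperbolic space have distinct fixed-point pairs at infinity, so their axes diverge and no bounded neighbourhood of one can contain the other.

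Next I would produce, via the Bestvina--Fujiwara counting construction of \cite{BF}, a nontrivial quasicharacter sensitive to the axis of $g_1$ (counting signed copies of a long subword read off that axis); by Remark \ref{nota: pseudocharacter} its homogenization is a pseudocharacter with $h_1(g_1)>0$. Doing the same for $g_2$ and taking a positive combination $h=\lambda_1 h_1+\lambda_2 h_2$ (or, more efficiently, choosing a single counting word compatible with both axes, which is feasible because the action is nonelementary) yields a pseudocharacter $h$ with $h(g_1)>0$ and $h(g_2)>0$. By the very form of the B--F counting functional, the oscillation of $h$ along any $G$-orbit in $X$ is controlled by distances in $X$; this gives the \emph{bornologous on the action} condition. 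This is the content that Section 3 of the paper is set up to verify, and it is the heart of the argument.

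Having arranged all the hypotheses of Proposition \ref{Prop: bornologous-introd}, I apply it to conclude that $g_1^{\infty}\not\sim g_2^{\infty}$ in $E(h)$; since both rays are positive, this gives $|E(h)^+|\ge 2$. The same argument applied to $g_1^{-1},g_2^{-1}$ (equivalently, to the pseudocharacter $-h$, with its hypotheses now satisfied at the opposite ends of the axes) shows $|E(h)^-|\ge 2$. Therefore $h$ is neither uniform nor unipotent, so by definition $h$ is bushy.

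The main obstacle I expect is the second step: arranging a single pseudocharacter $h$ that is simultaneously strictly positive on two independent hyperbolic elements \emph{and} bornologous on the action. The positivity is easy to get one axis at a time; the delicate point is that the convex combination preserves both strict positivity and the bornologous property under the geometry of the quasi-tree. Everything else (producing the independent pair $g_1,g_2$, verifying infinite Hausdorff distance, invoking Proposition \ref{Prop: bornologous-introd}) is routine once this calibrated pseudocharacter is in hand.
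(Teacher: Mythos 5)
Your overall strategy is the same as the paper's: build Bestvina--Fujiwara counting quasicharacters, homogenize, check that the result is bornologous on the action (Proposition \ref{prop: bounded}) and that the two quasi-axes are at infinite Hausdorff distance (Lemma \ref{lema: divergent}), and then invoke Proposition \ref{Prop: bornologous} on both ends of the axes to get two positive and two negative classes in $E(h)$. However, the step you yourself flag as ``the main obstacle'' is a genuine gap, and it is exactly the point where the paper has to do real work. A positive combination $h=\lambda_1 h_1+\lambda_2 h_2$ of a quasicharacter positive on $g_1$ and one positive on $g_2$ need not be positive on either element: nothing prevents $\lim_{n\to\infty} h_2(g_1^n)/n$ from being a large negative number, so the homogenization of the sum could vanish or change sign on $g_1$. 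Your alternative of ``a single counting word compatible with both axes'' is not developed and is not obviously feasible.

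The paper closes this gap by quoting two specific results of \cite{BF}. Proposition \ref{Prop2: BF} replaces the original pair by a sequence $f_1,f_2,\dots$ of hyperbolic elements (cyclically reduced, inside a Schottky subgroup) with $f_i\not\sim f_i^{-1}$ and $f_i\not\sim f_j^{\pm1}$ for $j<i$; Proposition \ref{Prop5: BF} then lets one choose exponents $a_i$ so that $h_{f_i^{a_i}}$ is unbounded (with the desired sign) on $\langle f_i\rangle$ while being identically zero on the cyclic subgroup generated by the other element. With this decoupling, the homogenization $h$ of $h_1+h_2$ satisfies $h(f_1)>0$ and $h(f_2)>0$ simultaneously, and the rest of your argument goes through as you describe. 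Without some such mechanism your proposal does not actually produce the required pseudocharacter, so you should either import Propositions \ref{Prop2: BF} and \ref{Prop5: BF} as the paper does or supply an equivalent calibration device.
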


A quasi-action of a group $G$ on a metric space $X$ associates to each $g\in G$ a quasi-isometry $A_g\co X \to X$ with uniform quasi-isometry constants so that $A_{Id}=Id_X$ and such that the distance between $A_h\circ A_g$ and $A_{hg}$ in the sup norm is uniformly bounded independent of $g,h\in G$. This is a natural and interesting extension of group actions and it has been relevant in relation to trees. In \cite{MSW}, the authors prove that every cobounded quasi-action on a bounded valence bushy tree is quasi-conjugate to an action on a tree. However, there are examples of quasi-actions on simplicial trees which are not quasi-conjugate to actions on \br-trees. See \cite{Man} for the examples and \cite{Man2} for further results on quasi-actions on trees. 

Given a non-uniform pseudocharacter $h$, Manning builds in \cite{Man} a cobounded quasi-action on a bushy tree $T$. In section 4 we show that this construction can be made adding certain condition in the relation between the space $E(h)$ and the boundary of the tree $\partial T$.

In the last section we state some implications on the space of pseudocharacters and, therefore, in the cobounded cohomology of the group. 

\begin{cor}[\ref{Cor: bushy}] If there is a bushy pseudocharacter $h\co G \to \br$ then, the dimension of the space generated by the bushy pseudocharacters on $G$ as a vector space over $\br$ is the cardinal of the continuum. 
\end{cor}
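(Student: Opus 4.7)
The plan is to chain together Manning's Proposition \cite[4.27]{Man}, the main construction of Bestvina--Fujiwara \cite{BF}, and the results of Sections 2--3 of the present paper. First I would apply \cite[4.27]{Man} to the hypothesized bushy pseudocharacter $h$: this yields a Bestvina--Fujiwara action of $G$ on a quasi-tree $X$.

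Next I would invoke the main construction of \cite{BF}: starting from any Bestvina--Fujiwara action, it produces a family $\{q_\lambda\}_{\lambda\in\Lambda}$ of homogeneous quasimorphisms with $|\Lambda|=2^{\aleph_0}$ whose classes in $H^2_b(G;\br)$ are linearly independent. By Remark \ref{nota: pseudocharacter}, each $q_\lambda$ can be replaced by a pseudocharacter $h_\lambda$ with $h_\lambda-q_\lambda$ bounded; linear independence modulo bounded functions is preserved, so the resulting $\{h_\lambda\}$ is a linearly independent family of pseudocharacters on $G$ of cardinality the continuum.

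The key remaining step is to show that each $h_\lambda$ is bushy. By construction, each $q_\lambda$ is assembled from a pair of hyperbolic elements $g_1,g_2$ whose quasi-axes in $X$ have infinite Hausdorff distance and which satisfy $h_\lambda(g_1)>0$ and $h_\lambda(g_2)>0$. The WPD/properness ingredient built into the notion of a Bestvina--Fujiwara action supplies precisely the hypothesis needed to invoke Corollary \ref{Cor: metrically proper-introd} for each $h_\lambda$, yielding $g_1^\infty\not\sim g_2^\infty$ in $E(h_\lambda)$ and hence $|E(h_\lambda)^+|\geq 2$, so $h_\lambda$ is bushy. The span of the bushy pseudocharacters therefore contains a linearly independent family of cardinality $2^{\aleph_0}$, and since the total space of pseudocharacters on the countable group $G$ has cardinality at most $2^{\aleph_0}$, the dimension is exactly the continuum.

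The main obstacle is this last step: confirming uniformly across the continuum-sized family $\{h_\lambda\}$ that the hypotheses of Corollary \ref{Cor: metrically proper-introd} hold, rather than merely for the distinguished pseudocharacter $h$. If some $h_\lambda$ fail to satisfy the bornologous/properness requirement directly, the fallback is to replace them by perturbations $h+t\,h_\lambda$ with $t\in\br$; since $h$ is already bushy and well-adapted to the action, a routine check shows such combinations remain bushy for all but countably many $t$, and the resulting perturbed family still spans a continuum-dimensional subspace of bushy pseudocharacters.
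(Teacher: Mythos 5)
Your overall architecture coincides with the paper's: apply Manning's \cite[Proposition 4.27]{Man} to the given bushy $h$ to obtain a Bestvina--Fujiwara action on a quasi-tree, run the Bestvina--Fujiwara construction to get the sequence $f_1,f_2,\dots$ and the quasimorphisms $h_i=h_{f_i^{a_i}}$, homogenize, and certify that the resulting pseudocharacters are bushy using the results of Section 2. The gap is in that last certification. You invoke Corollary \ref{Cor: metrically proper-introd}, justifying its hypothesis by a ``WPD/properness ingredient built into the notion of a Bestvina--Fujiwara action.'' In this paper a Bestvina--Fujiwara action is merely a nonelementary isometric action on a hyperbolic graph admitting independent $g_1\not\sim g_2$ in the sense of Definition \ref{def: sim}; no WPD condition and no metric properness is assumed, and the action supplied by Manning's construction is in general far from metrically proper (vertex stabilizers in $X$ can be infinite). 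So the hypothesis of Corollary \ref{Cor: metrically proper-introd} is simply not available, and the step fails as written.

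The paper gets around this by using Proposition \ref{Prop: bornologous} instead, whose hypothesis ``$h$ is bornologous on the action'' is verified not from properties of the action but from the specific form of the Fujiwara quasimorphisms: Lemma \ref{lema: bounded} and Proposition \ref{prop: bounded} show that every $h_w$ (hence every finite sum $h_i+h_{i+1}$ and every bounded perturbation of it, such as its homogenization) is automatically bornologous on the action, while Lemma \ref{lema: divergent} converts $f_i\not\sim f_{i+1}$ into the infinite-Hausdorff-distance hypothesis on quasi-axes. Your fallback of passing to perturbations $h+t\,h_\lambda$ does not repair the argument, because the missing hypothesis concerns the action, not the individual pseudocharacters. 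A second, smaller point: the positivity $h_\lambda(g_1)>0$, $h_\lambda(g_2)>0$ is not automatic across the family; the paper arranges it by choosing $a_i$ as in Proposition \ref{Prop5: BF} so that $\lim_{k\to\infty}h_i(f_i^k)=+\infty$ while $h_i$ vanishes on $\langle f_j\rangle$ for $j<i$ and on $\langle f_{i+1}\rangle$, and then working with pseudocharacters near $h_i+h_{i+1}$ as in the proof of Theorem \ref{Tma: bushy}.
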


All groups are assumed to be finitely presented.

\section{Actions and pseudocharacters}

\begin{definicion} A map between metric spaces, $f:(X,d_X)\to (Y,d_Y)$, is
said to be \emph{quasi-isometric} if there are constants $\lambda
\geq 1$ and $C>0$ such that $\forall x,x'\in X$,
$\frac{1}{\lambda}d_X(x,x') -A \leq d_Y(f(x),f(x'))\leq \lambda
d_X(x,x')+A$. If there is a constant $B>0$ such that
$N_B(f(X))=Y$ where $N_B(f(X))=\{y\in Y\, | \, d_Y(y,f(X))< B\}$, then $f$ is a
\emph{quasi-isometry} and $X,Y$ are \emph{quasi-isometric}.
\end{definicion}

\begin{teorema}\cite[Theorem 4.6]{Man}\label{Bottleneck} Let $Y$ be a geodesic metric space. The following are equivalent:
\begin{itemize}\item[(1)] $Y$ is quasi-isometric to some simplicial tree $\Gamma$.
\item[(2)] (Bottleneck Property) There is some $\Delta>0$ so that for all $x,y \in Y$ there is a midpoint $m=m(x,y)$ with $d(x,m)=d(y,m)=\frac{1}{2}d(x,y)$ and the property that any path from $x$ to $y$ must pass within less than $\Delta$ of the point $m$.
\end{itemize}
\end{teorema}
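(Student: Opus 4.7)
The plan is to prove both implications separately; the forward direction is essentially a tracking lemma in trees, while the converse requires building a tree out of the bottleneck data.

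For $(1)\Rightarrow(2)$: fix a $(\lambda,A)$-quasi-isometry $\phi\co Y\to T$ with $B$-dense image, and for given $x,y\in Y$ pick the geodesic midpoint $m$. Given any path $\gamma$ from $x$ to $y$ in $Y$, discretize it and push it forward by $\phi$. The image is a coarse path in $T$ from $\phi(x)$ to $\phi(y)$, so its consecutive points are at bounded jump size; since $T$ is a tree, every point of the geodesic $[\phi(x),\phi(y)]\subset T$ lies within a uniform constant of the image of $\gamma$. In particular the midpoint of $[\phi(x),\phi(y)]$, which is within bounded distance of $\phi(m)$ (up to $\lambda,A$), is tracked by $\gamma$. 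Pulling back through $\phi$ then yields a uniform $\Delta=\Delta(\lambda,A,B)$ such that $\gamma$ passes within $\Delta$ of $m$.

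For $(2)\Rightarrow(1)$: the idea is to iterate the bottleneck and extract a tree. Fix a basepoint $y_0\in Y$ and a maximal $\Delta$-separated net $N\subset Y$ containing $y_0$. Applying the bottleneck property to $x,y\in N$ gives a midpoint $m(x,y)$ forced on every connecting path, and iterating gives a sequence of forced midpoints subdividing the geodesic from $x$ to $y$. I would use this to define, for each $y\in N$, a discrete ``address'' along geodesics emanating from $y_0$, and then build a graph on $N$ by connecting $x$ to the first forced midpoint on $[x,y_0]$ (or its nearest net representative). The bottleneck property should imply this graph has no substantial loops, namely that any closed cycle in it bounds a region of bounded diameter, hence collapsing loops yields a simplicial tree $\Gamma$ quasi-isometric to $N$, and $N$ is quasi-isometric to $Y$ since $N$ is a net.

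The main obstacle is $(2)\Rightarrow(1)$, and specifically showing that the candidate graph obtained from the net can be collapsed to an honest simplicial tree with control on the quasi-isometry constants: one needs to rule out the possibility of two ``long'' geodesic segments connecting the same pair of points in the net without coming uniformly close to each other, and this must be derived purely from the midpoint bottleneck hypothesis rather than from geodesic uniqueness. A clean way to handle this will likely be to prove first that the bottleneck property implies a Gromov $4$-point inequality up to a bounded additive error and that the Gromov boundary of $Y$ is totally disconnected, which is precisely the coarse combinatorial content needed to guarantee quasi-isometry to a simplicial (rather than merely real) tree.
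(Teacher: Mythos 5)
First, note that the paper does not prove this theorem at all: it is quoted verbatim from Manning \cite[Theorem 4.6]{Man} and used as a black box, so there is no in-paper argument to compare against. Judged on its own terms, your proposal is fine on the easy half but has a real gap on the hard half. The direction $(1)\Rightarrow(2)$ works essentially as you describe: discretize $\gamma$, push forward, use that a coarse path in a tree must come within half its jump size of any point separating its endpoints, and note that $\phi(m)$ lies within a uniform distance of the geodesic $[\phi(x),\phi(y)]$ because $\phi$ sends the geodesic $[x,y]$ to a quasi-geodesic and quasi-geodesics in trees are uniformly close to geodesics. (Your phrasing that the \emph{midpoint} of $[\phi(x),\phi(y)]$ is close to $\phi(m)$ is not quite right --- $\phi$ can distort the two halves of $[x,y]$ by different factors --- but this is harmless: you only need $\phi(m)$ to be close to \emph{some} point of $[\phi(x),\phi(y)]$.)

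The direction $(2)\Rightarrow(1)$ is where all the content lies, and what you give is a programme rather than a proof: the graph on the net is not shown to exist with the properties claimed, ``no substantial loops'' and ``collapsing loops yields a simplicial tree quasi-isometric to $N$'' are exactly the statements that need proving, and no mechanism is given for deriving them from the bottleneck hypothesis. More seriously, the fallback you propose is based on a false principle: for geodesic \emph{spaces} (as opposed to groups), Gromov hyperbolicity together with a totally disconnected boundary does \emph{not} imply quasi-isometry to a tree. A horoball in $\mathbb{H}^2$ is a convex, hence hyperbolic, geodesic space whose boundary at infinity is a single point, yet it is not a quasi-tree and visibly fails the bottleneck property (a path running along the bounding horosphere avoids the midpoint of a geodesic between two far-apart horospherical points by an arbitrarily large distance). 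So establishing an approximate four-point inequality and total disconnectedness of $\partial Y$ would not finish the argument. Manning's actual proof constructs the approximating simplicial tree directly from the bottleneck data (essentially a tree of coarse components of complements of balls about a basepoint, with the bottleneck constant controlling the quasi-isometry), and some construction of this kind --- with a genuine verification that the resulting map is a quasi-isometry --- is unavoidable here.
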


Let $(X,d)$ be a metric space. Fix a base point $o\in X$ and for
$x,x'\in X$ put $(x|x')_o=\frac{1}{2}(d(x,o)+d(x',o)-d(x,x'))$. The number $(x|x')_o$
is non-negative and it is called the \emph{Gromov product} of
$x,x'$ with respect to $o$. See \cite{Gr}.

\begin{definicion} A metric space $X$ is \emph{(Gromov)
hyperbolic} if it satisfies the $\delta$-inequality
\[(x|y)_o\geq min\{(x|z)_o,(z|y)_o\}-\delta\] for some $\delta\geq
0$, for every base point $o\in X$ and all $x,y,z \in X$.
\end{definicion}

Let $X$ be a hyperbolic space and $o\in X$ a base point. A
sequence of points $\{x_i\}\subset X$ \emph{converges to infinity}
if \[\lim_{i,j\to \infty} (x_i|x_j)_o=\infty.\] This property is
independent of the choice of $o$ since
\[|(x|x')_o-(x|x')_{o'}|\leq |oo'|\] for any $x,x',o,o' \in X$.

Two sequences $\{x_i\},\{x'_i\}$ that converge to infinity are
\emph{equivalent} if \[\lim_{i\to \infty} (x_i|x'_i)_o=\infty.\]
Using the $\delta$-inequality, we easily see that this defines an
equivalence relation for sequences in $X$ converging to infinity.
The \emph{boundary at infinity} $\partial_\infty X$ of $X$ is
defined to be the set of equivalence classes of sequences
converging to infinity.

The following lemma is a well known property of quasi-geodesics (see \cite{Bow} or \cite{F1}). The statement with the proof can be found in \cite{Man2}.

\begin{lema}\label{Lemma: stab} Let $X$ be a $\delta$-hyperbolic space. Given $K\geq 1$ and $L \geq 0$, there exists $B(K,L,\delta)\geq 0$ such that if $\gamma_1, \gamma_2$ are two $(K,L)$-quasi-geodesics with the same endpoints in $X\cup \partial X$, then $\gamma_1\subset N_B(\gamma_2)$ and $\gamma_2\subset N_B(\gamma_1)$. 
\end{lema}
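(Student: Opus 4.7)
The plan is to derive this as the classical Morse stability lemma for quasi-geodesics in Gromov hyperbolic spaces, proceeding in three stages: first reduce to finite-endpoint segments, then compare each quasi-geodesic to a genuine geodesic joining the endpoints, and finally pass to the limit for endpoints in $\partial X$.

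First I would handle the case where both $\gamma_1$ and $\gamma_2$ are parametrized on compact intervals with endpoints $p,q\in X$. A standard preliminary step is to replace a $(K,L)$-quasi-geodesic by a \emph{continuous} $(K',L')$-quasi-geodesic at the cost of slightly worse constants: sample $\gamma_i$ at unit parameter values and connect consecutive samples by geodesic segments in $X$. This lets me talk about lengths of subpaths and avoids pathologies. It suffices then to bound the Hausdorff distance between a continuous $(K',L')$-quasi-geodesic $\gamma$ and any geodesic $[p,q]$ joining its endpoints, because the triangle inequality for Hausdorff distance gives the conclusion for $\gamma_1$ and $\gamma_2$.

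The core step is the \emph{divergence argument}. Let $R$ denote the largest distance from a point of $\gamma$ to $[p,q]$, attained at some $x_0=\gamma(t_0)$. On either side of $t_0$ walk along $\gamma$ until the first times $t_-<t_0<t_+$ at which $\gamma(t_\pm)$ lies at distance $\leq R$ from $[p,q]$ but the next sample has just escaped $N_{R}([p,q])$; project $\gamma(t_\pm)$ to nearest points $y_\pm\in[p,q]$. The subpath of $\gamma$ between $t_-$ and $t_+$ stays at distance $\geq R - 1$ from $[p,q]$, while the concatenation of $[y_-,\gamma(t_-)]$, this subpath and $[\gamma(t_+),y_+]$ is a path joining $y_-$ to $y_+$. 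Using $\delta$-thinness of the ``quadrilateral'' $y_-\gamma(t_-)\gamma(t_+)y_+$ together with the geodesic $[y_-,y_+]\subset[p,q]$, one shows that any path staying in the complement of $N_{R-O(\delta)}([p,q])$ must have length exponential in $R$; comparing with the linear length bound on a $(K',L')$-quasi-geodesic yields $R\leq B_0(K,L,\delta)$ for an explicit $B_0$. The reverse inclusion $[p,q]\subset N_{B_0}(\gamma)$ is then routine: any point of $[p,q]$ not within $B_0$ of $\gamma$ would produce a gap in the projection of $\gamma$ onto $[p,q]$ larger than the quasi-geodesic can skip.

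Finally I would upgrade to the case of endpoints in $\partial X$. If $\gamma_i\co [0,\infty)\to X$ are rays with the same ideal endpoint, pick $s_n, t_n\to\infty$ so that $(\gamma_1(s_n)|\gamma_2(t_n))_o\to\infty$; connect $\gamma_1(s_n)$ and $\gamma_2(t_n)$ by a geodesic and apply the segment case to the truncated quasi-geodesics (appending this short geodesic to one of them, noting concatenations of quasi-geodesics with matching endpoints are again quasi-geodesics up to controlled constants). Letting $n\to\infty$ and similarly handling both endpoints at infinity gives a uniform $B$ depending only on $K,L,\delta$. The main technical obstacle is the exponential-divergence estimate in the middle paragraph, which is where hyperbolicity is used essentially and where all the constants must be tracked carefully; everything else is a comparison with geodesics and a limiting argument.
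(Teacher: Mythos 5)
The paper does not actually prove this lemma: it is quoted as a ``well known property of quasi-geodesics'' with an explicit pointer to the literature (Bowditch, Fujiwara, and Manning's \emph{Quasi-actions on trees and property (QFA)}) for the statement and proof. So there is no in-paper argument to compare against; your proposal has to stand on its own. For the case of endpoints in $X$ it does: taming to a continuous quasi-geodesic, the exponential-divergence estimate bounding how far $\gamma$ can stray from $[p,q]$, and the connectedness argument for the reverse inclusion are exactly the classical Morse stability proof, and the reduction of the two-quasi-geodesic statement to the quasi-geodesic-versus-geodesic statement via the triangle inequality for Hausdorff distance is fine.

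The gap is in your extension to ideal endpoints. You propose to connect $\gamma_1(s_n)$ to $\gamma_2(t_n)$ by a geodesic, append it to one of the truncated quasi-geodesics, and invoke the claim that ``concatenations of quasi-geodesics with matching endpoints are again quasi-geodesics up to controlled constants.'' That claim is false in general (a geodesic followed by its reverse is not a quasi-geodesic), and the specific instance you need is not justified: from $(\gamma_1(s_n)\mid\gamma_2(t_n))_o\to\infty$ you get no bound on $d(\gamma_1(s_n),\gamma_2(t_n))$, so the appended geodesic need not be short and cannot be absorbed into the additive constant. The standard repair is different: apply the segment case to $\gamma_1|_{[0,s_n]}$ and a geodesic $[\gamma_1(0),\gamma_1(s_n)]$ to place each quasi-ray uniformly close to geodesic segments with one endpoint fixed, pass to a limit (or argue directly with Gromov products) to compare with a geodesic ray or line joining the same points of $X\cup\partial X$, and then use thinness of ideal triangles to compare the two resulting geodesics. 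Alternatively one bounds the Gromov product $(\gamma_1(s)\mid\gamma_2(t))$ along the rays directly. As written, your limiting step does not go through; with this repair the argument becomes the standard one that the cited references carry out.
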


\begin{definicion} Fix $x_0\in X$, where $X$ is a $\delta$-hyperbolic metric space on which $G$ quasi-acts. Let $O_{g,x}\co \br \to X$ be defined by $O_{g,x}(t)=g^{\left\lfloor t\right\rfloor}x$ where $\left\lfloor t\right\rfloor$ is the largest integer smaller than $t$.  Then it is said that $g$ \emph{quasi-acts elliptically} if $O_{g,x}$ has bounded image, and $g$ \emph{quasi-acts hyperbolically} if $O_{g,x}$ is a quasi-geodesic. If $G$ acts isometrically on $X$ then it is said that $g$ \emph{acts} elliptically or hyperbolically or that $g$ is \emph{elliptic} or \emph{hyperbolic}.
\end{definicion}

It is readily seen that this definition is independent of $x$ and agrees with the standard definitions in case $G$ acts isometrically.

If $g\in G$ is hyperbolic $x\in X$, and $\gamma_0\co [0,1]\to X$ is a geodesic segment with $\gamma_0(0)=x$ and $\gamma_0(1)=g(x)$, then it is not hard to check that $\Gamma_{g,x,\gamma_0} \co \br \to X$ given by \begin{equation}\Gamma_{g,x,\gamma_0}(t)=g^{\left\lfloor t\right\rfloor}\gamma_0(t-\left\lfloor t\right\rfloor) \label{quasi-geodesic} \end{equation} is a continuous quasi-geodesic. Moreover, $g$ is an isometry of $X$ which maps this quasi-geodesic to itself by a nontrivial translation. See Figure \ref{fig: Gamma}.

\begin{figure}
\centering 
\includegraphics[scale=0.5]{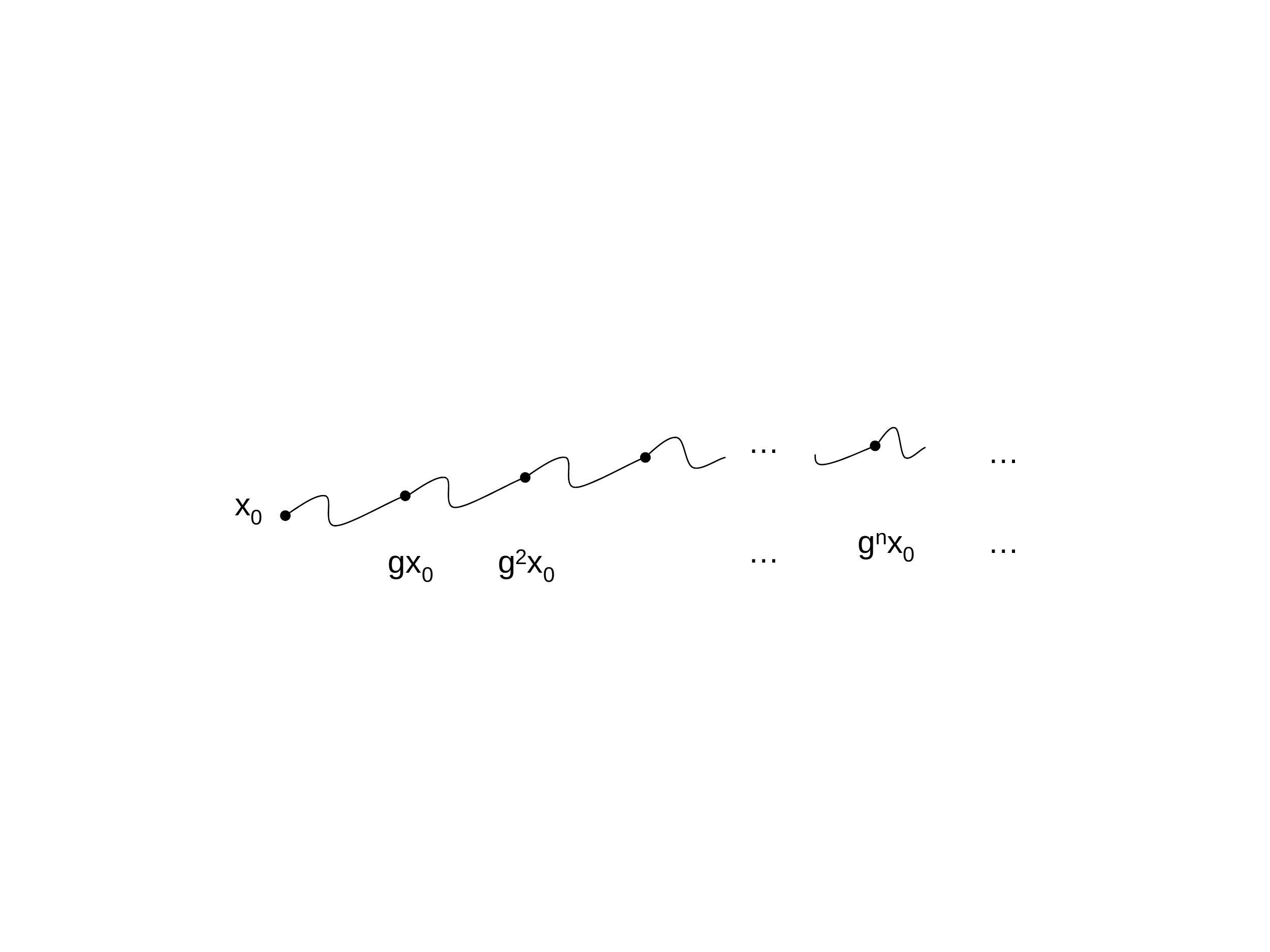}
\caption{$g$ acts on $\Gamma_{g,x,\gamma_0}$ by a nontrivial translation.}
\label{fig: Gamma}
\end{figure}


A quasi-geodesic where $g$ acts by nontrivial translation will be referred to as a \emph{quasi-axis} (or $(K-L)$-\emph{quasi-axis} if the constants are relevant). A quasi-axis of $g$ is given the $g$-orientation by the requirement that $g$ acts as a positive translation.

\begin{definicion} Let $G$ be a group acting by isometries on a metric space $X$. We say that a pseudocharacter $h\co G \to \br$ is \textbf{bornologous on the action} if given any $x_0\in X$ and any $g\in G$, $\forall R>0$ there exists $S>0$ such that $\forall g'\in G$ with $g'(x_0)\in B(g(x_0),R)$,  $|h(g')-h(g)|\leq S$. 
\end{definicion}

A \emph{quasi-tree} is a complete geodesic metric space quasi-isometric to some simplicial tree. These spaces satisfy bottleneck property, see \ref{Bottleneck}. Herein, we will add the asumption that the quasi-tree is a graph. This is not a restrictive asumption since we are working in a coarse setting but it has obvious technical advantadges. Therefore, from now on, a \textbf{quasi-tree} will be a graph satisfying bottleneck property.

\begin{prop}\label{Prop: bornologous} Let $G$ be a group acting on a quasi-tree $X$. Let $g_1,g_2$ be two hyperbolic elements of $G$ such that $d_H(\Gamma_1(g_1,x_0,\gamma_1),\Gamma_2(g_2,x_0,\gamma_2))=\infty$. Then, if $h$ is a pseudocharacter such that 
$h(g_1)>0$ and $h(g_2)>0$ and $h$ is bornologous on the action
then, $g_1^\infty\not\sim g_2^\infty$ in $E(h)$.
\end{prop}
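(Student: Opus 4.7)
The plan is a proof by contradiction. Suppose $g_1^\infty\sim_C g_2^\infty$ in $E(h)$ for some $C>0$. Since $h(g_i)>0$ we have $\sigma(g_i^\infty)=+1$, so for every $D>C$ the definition of $\sim_C$ supplies a prefix $w_p$ of $g_1^\infty$, a prefix $v_p$ of $g_2^\infty$, and a connecting word $d$ in $S\cup S^{-1}$ with $w_pd=v_p$ in $G$ and $|h(w_pd_p)-D|\le C$ for every prefix $d_p$ of $d$. Because $h$ is a pseudocharacter, the condition $|h(w_p)-D|\le C$ forces $w_p$ to be, up to a bounded tail, the power $g_1^{n_1}$ with $n_1\approx D/h(g_1)$; similarly $v_p$ is essentially $g_2^{n_2}$ with $n_2\approx D/h(g_2)$. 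In particular $n_1,n_2\to\infty$ as $D\to\infty$.

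Apply the orbit map $g\mapsto g(x_0)$ to the sequence $w_p,w_pd_1,w_pd_1d_2,\ldots,w_pd=v_p$. Since each generator moves $x_0$ by at most $K=\max_{s\in S\cup S^{-1}}d(x_0,s(x_0))$, this is a discrete path in $X$ with jump size $\le K$; interpolating by geodesics yields a continuous path $\alpha$ from $g_1^{n_1}(x_0)\in\Gamma_1$ to $g_2^{n_2}(x_0)\in\Gamma_2$. The geometric heart of the proof is the claim that there is a constant $R_0$, depending only on $x_0,\Gamma_1,\Gamma_2$ and the quasi-tree constants, so that $\alpha$ must enter $B(x_0,R_0)$. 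I would prove this by iterating Theorem \ref{Bottleneck}: in a quasi-tree, any path between two points tracks a coarse geodesic between them within a fixed distance of every one of its points. By Lemma \ref{Lemma: stab} together with the hypothesis $d_H(\Gamma_1,\Gamma_2)=\infty$, the coarse geodesic from $g_1^{n_1}(x_0)$ to $g_2^{n_2}(x_0)$ must run back along $\Gamma_1$ into the ``branch region'' where the two axes separate, which lies in a fixed bounded neighbourhood of $x_0$, and then out along $\Gamma_2$; consequently so does $\alpha$.

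Granting the claim, pick a prefix $d_{p^\star}$ with $(w_pd_{p^\star})(x_0)\in B(x_0,R_0)$. Apply the bornologous hypothesis with $g=e$: there exists $S=S(R_0)$ so that $|h(w_pd_{p^\star})-h(e)|\le S$, and $h(e)=0$ because $h$ is a pseudocharacter, so $|h(w_pd_{p^\star})|\le S$. Combined with $|h(w_pd_{p^\star})-D|\le C$, this gives $D\le S+C$. Since $D$ could be taken arbitrarily large, this is a contradiction, and therefore $g_1^\infty\not\sim g_2^\infty$ in $E(h)$.

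The main obstacle is the geometric claim above. The subtlety is that the coarse geodesic between $g_1^{n_1}(x_0)$ and $g_2^{n_2}(x_0)$ lives in $X$, while the path $\alpha$ is controlled by the Cayley graph via the orbit map; bridging the two requires both the bottleneck property of $X$ and the stability lemma for quasi-geodesics, and it is here that the hypothesis $d_H(\Gamma_1,\Gamma_2)=\infty$ must be used quantitatively in order to extract the \emph{uniform} constant $R_0$ independent of $D$. Once that uniform geometric trap is in place, the bornologous hypothesis does the rest of the work essentially for free.
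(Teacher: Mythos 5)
Your overall strategy is the right one and is close in spirit to the paper's: assume $g_1^\infty\sim_C g_2^\infty$, use the bottleneck property to force the orbit of a connecting word through a controlled region of $X$, and use the bornologous hypothesis there to contradict $|h(w_pd_p)-D|\le C$ for large $D$. The gap is in your geometric claim. You need an $R_0$, independent of $D$, such that the path $\alpha$ from $g_1^{n_1}(x_0)$ to $g_2^{n_2}(x_0)$ enters $B(x_0,R_0)$; via the iterated bottleneck property this amounts to the Gromov products $(g_1^{n_1}(x_0)\mid g_2^{n_2}(x_0))_{x_0}$ being uniformly bounded, i.e.\ to the \emph{positive} rays of $\Gamma_1$ and $\Gamma_2$ diverging. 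The hypothesis, however, is only that the full bi-infinite quasi-axes are at infinite Hausdorff distance, and this can be witnessed entirely by the negative rays while the positive rays are asymptotic ($g_1^{+\infty}=g_2^{+\infty}$ in $\partial X$); in that case no uniform $R_0$ exists and the argument collapses. This is not a removable technicality: take $G=BS(1,2)=\langle a,t\mid tat^{-1}=a^2\rangle$ acting on its Bass--Serre tree, $x_0=\langle a\rangle$, $g_1=t^{-1}$, $g_2=(at)^{-1}=t^{-1}a^{-1}$, and $h$ the homomorphism with $h(a)=0$, $h(t)=-1$. Then $h(g_1)=h(g_2)=1>0$, $h$ is bornologous on the action (for any $g'$ with $d(g'x_0,gx_0)\le R$ the element $g'g^{-1}$ has translation length at most $R$, hence $|h(g'g^{-1})|\le R$), and $d_H(\Gamma_1,\Gamma_2)=\infty$ because the axes diverge in the $t^{+\infty}$ direction; yet $g_1^n(x_0)=g_2^n(x_0)$ for every $n\ge 0$, so connecting paths never approach $x_0$. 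Worse, $g_2^n=g_1^n\,a^{-(2^n-1)}$ with $h$ constant equal to $n$ along the connecting word $a^{-(2^n-1)}$, so $g_1^\infty\sim_1 g_2^\infty$ and the conclusion itself fails: the statement genuinely requires divergence of the positive rays (equivalently $g_1^{+\infty}\ne g_2^{+\infty}$), which is what actually holds in the paper's applications, where $g_1,g_2$ are independent or $g_1\not\sim g_2$ in the Bestvina--Fujiwara sense.

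For comparison, the paper places the trap elsewhere: it fixes a first level $D_1$, takes $m$ to be the midpoint of a geodesic from $w_p(x_0)$ to $v_p(x_0)$, argues that the forward orbit rays issuing from $w_p$ and $v_p$ miss $B(m,\Delta)$, and then for a second level $D_2\gg D_1$ forces the new connecting word through $B(m,\Delta)$, so that comparing $h$ at two points within $2\Delta+2F$ of each other bounds $D_2-D_1$. This avoids anchoring anything at $x_0$ and avoids your comparison of $D$ with $h(e)=0$, but it still rests on $d(w_p(x_0),v_p(x_0))\to\infty$ as $D_1\to\infty$, which is again a statement about the positive rays and fails in the example above. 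If you strengthen the hypothesis to $g_1^{+\infty}\ne g_2^{+\infty}$ (or to infinite Hausdorff distance between the positive sub-rays), then $(g_1^{+\infty}\mid g_2^{+\infty})_{x_0}<\infty$ supplies your $R_0$, and the rest of your argument --- including the clean one-level contradiction at the basepoint --- goes through and is, if anything, tidier than the two-level comparison.
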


\begin{proof} Since $h(g_1)>0$ and $h(g_2)>0$, $\sigma_h(g_1)> 0$ and $\sigma_h(g_2)> 0$. 
Let $w_j$ be the word representing $g_j$ in the letters of the generating set $S\cup S^{-1}$. Then $w_j w_j ...=w_j^\infty$ is an element of $E(h)$ fixed by $g_j$ for $j=1,2$. Note that, with the assumptions taken, $\sigma(w_j^\infty)=+1$.

Let us see that $w_1^\infty\not \sim w_2^\infty$ in $E(h)$. 

Let us denote, for simplicity, $w=w_1^\infty$ and $v=w_2^\infty$ and suppose $w \sim_C v$ for some $C>0$. Then, given $D_1>C$  there is a connecting word $d=d_1...d_n$ such that:
\begin{itemize}
\item $w_pd=v_p$ in $G$ for some prefix $w_p$ of $w$ and some prefix $v_p$ of $v$,
\item $|h(w_pd_p)-D_1|\leq C$ for all prefixes $d_p$ of $d$.
\end{itemize}

By abuse of notation let us identify the group element $g$ with the word representing it, $w$. Therefore, we write $w(x_0)$ for the image of the isometric action $g$ on $x_0$. Let $\gamma$ be a geodesic path from $w_p(x_0)$ to $v_p(x_0)$ and let $m$ be the midpoint in $\gamma$. 
 
Let $\gamma_1$ be a geodesic path from $x_0$ to $w_1(x_0)$ and $\gamma_2$ be a geodesic path from $x_0$ to $w_2(x_0)$ and consider $\Gamma_1:=\Gamma_{g_1,x_0,\gamma_1}(t)$, $\Gamma_2:=\Gamma_{g_2,x_0,\gamma_2}(t)$ two continuous quasi-geodesics defined as in (\ref{quasi-geodesic}). Let $\Gamma_1(w_p,w_q)$ be the restriction of $\Gamma_{g_1,x_0,\gamma_1}(t)$ to a (quasi-isometric) path from $w_p(x_0)$ to $w_q(x_0)$ for any prefixes $w_p,w_q$ of $w$. Also, let $\Gamma_2(v_p,v_q)$ be the restriction of $\Gamma_{g_2,x_0,\gamma_2}(t)$ to a (quasi-isometric) path from $v_p(x_0)$ to $v_q(x_0)$ for any prefixes $v_p,v_q$ of $v$.

Let $\Delta$ be the bottelneck property constant.

Claim. Since $d_H(\Gamma_1(g_1,x_0,\gamma_1),\Gamma_2(g_2,x_0,\gamma_2))=\infty$, we may assume $D_1$ big enough to assure that for any $w_p\subset w_q$ and $v_p\subset v_q$,  then $\Gamma_1(w_p,w_q)\cap B(m,\Delta)=\emptyset$ and $\Gamma_2(v_p,v_q)\cap B(m,\Delta)=\emptyset$. See Figure \ref{fig: D1}.

Since $d_H(\Gamma_1(g_1,x_0,\gamma_1),\Gamma_2(g_2,x_0,\gamma_2))=\infty$, we may assume $D_1=D_1(h,\Delta, g_1,g_2)$ big enough to assure that $A:=d(w_p,v_p)$ is as big as we want. Notice that $d(w_1^i(x_0),w_1^{i+1}(x_0))= d(x_0,w_1(x_0))=:d_1$ and hence $d(w_1^i(x_0),w_1^{i+k}(x_0))\leq  k\cdot d_1$. Respectively, for $w_2$, $d(w_2^i(x_0),w_2^{i+1}(x_0))= d(x_0,w_2(x_0))=:d_2$ and hence $d(w_2^i(x_0),w_2^{i+k}(x_0))\leq  k\cdot d_1$. So, if $A$ is big enough, either $k$ is also big enough (depending on the distance $(A/2-\Delta)/\max\{d_1,d_2\}$) or we can assure that the quasi-geodesic $\Gamma_1(w_p,w_p\cdot w_1^k)$ from $w_p(x_0)$ to $w_p\cdot w_1^k(x_0)$ holds that $\Gamma_1(w_p,w_p\cdot w_1^k)\cap B(m,\Delta)=\emptyset$. Also, either $k$ is big enough or the quasi-geodesic $\Gamma_2(v_p,v_p\cdot w_2^k)$ from $v_p(x_0)$ to $v_p\cdot w_2^k(x_0)$ holds that $\Gamma_2(v_p,v_p\cdot w_2^k)\cap B(m,\Delta)=\emptyset$. 

Now, let us assume $k=k(h,\Delta, g_1,g_2)$ as big as we want and fix it asuming that the corresponding quasi-geodesics $\Gamma_1(w_p,w_p\cdot w_1^k)$ and $\Gamma_2(v_p,v_p\cdot w_2^k)$ do not intersect  the ball $B(m,\Delta)$. By hypothesis $h(g_j)>0$ and $h(g_j^n)=nh(g_j)$ for $j=1,2$. Then, for any $j\geq k$, $h(w_p\cdot w_1^j(x_0))=h(w_p)+jh(g_1)$ and $h(v_p\cdot w_2^j)=h(v_p)+jh(g_2)$ are much bigger than $D_1$. 
Then, since $h$ is bornologous on the action, we can assume $k$ big enough so that $w_p\cdot w_1^j(x_0)\not\in B(m,\Delta + \max\{d_1,d_2\})$ and $v_p\cdot w_2^j(x_0)\not\in B(m,\Delta +  \max\{d_1,d_2\})$ for any $j\geq k$. Thus, the quasi-geodesic segments $\Gamma_1(w_p\cdot w_1^j,w_p\cdot w_1^{j+1})\cap B(m,\Delta)=\emptyset$ and 
$\Gamma_2(v_p\cdot w_2^j,v_p\cdot w_2^{j+1})\cap B(m,\Delta)=\emptyset$ for every $j\geq 0$ proving the claim. \hfill$\square$

\begin{figure}
\centering 
\includegraphics[scale=0.4]{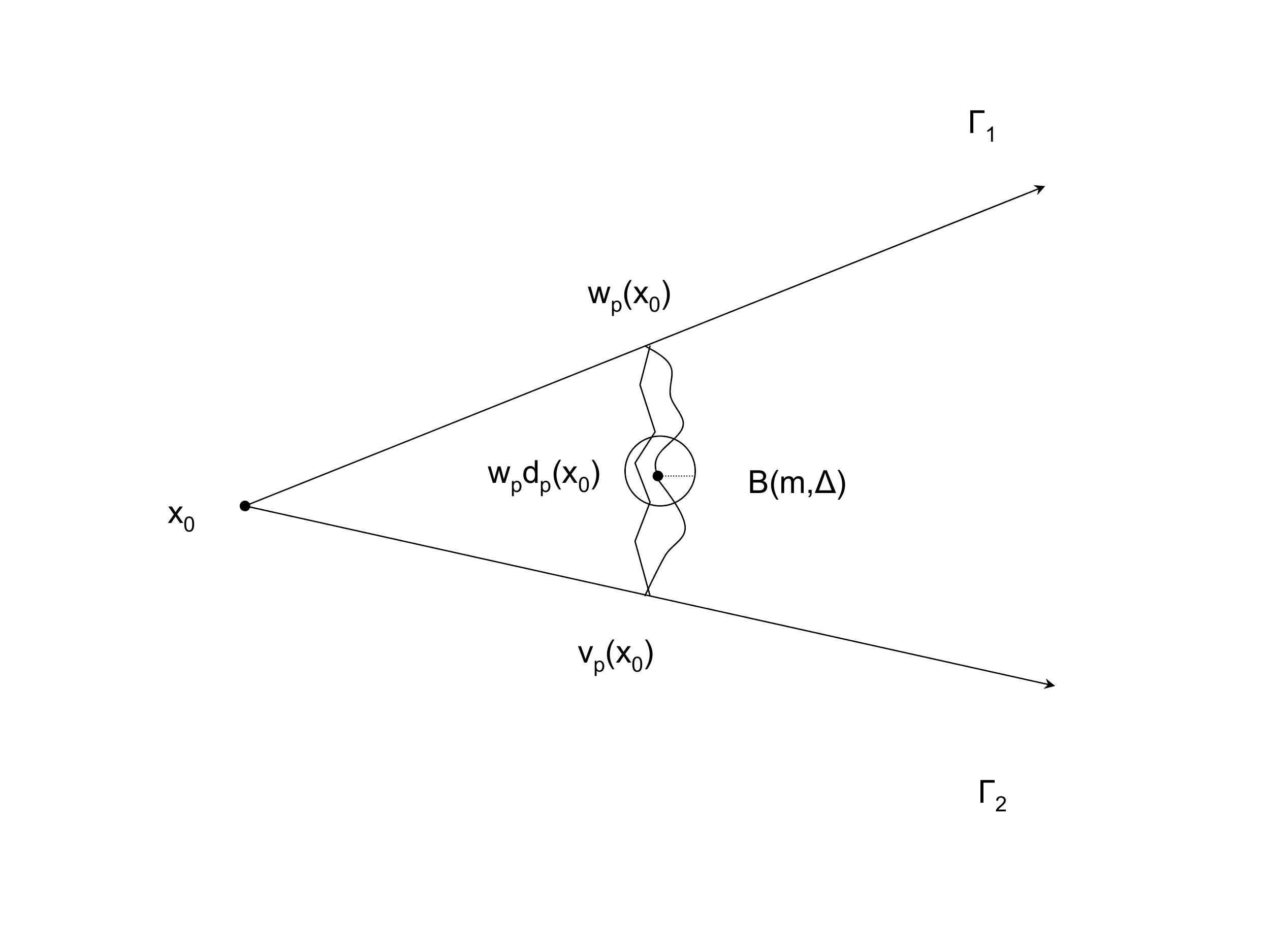}
\caption{Assuming $D_1$ big enough, we obtain that $\Gamma_1(w_p,w_p\cdot w_1^k)$ and $\Gamma_2(v_p,v_p\cdot w_2^k)$ do not intersect  the ball $B(m,\Delta)$.}
\label{fig: D1}
\end{figure}

Now, let $D_2>>h(m),D_1$. Then, we will reach a contradiction finding a uniform bound for $D_2-D_1$. 

By asumption, there is a connecting word $d'=d'_1...d'_n$ such that:
\begin{itemize}
\item $w_{p'}d'=v_{p'}$ in $G$ for some prefix $w_{p'}$ of $w$ and some prefix $v_{p'}$ of $v$,
\item $|h(w_{p'}d'_{p'})-D_2|\leq C$ for all prefixes $d'_{p'}$ of $d$.
\end{itemize}

Let $\Gamma_1:=\Gamma_1(w_p,w_p')$, $\Gamma_2:=\Gamma_2(v_p,v_p')$ quasi-geodesic paths defined as above from $w_p(x_0)$ to $w_{p'}(x_0)$ and from $v_p(x_0)$ to $v_{p'}(x_0)$. Let $\gamma'_j\co [0,1]$ be a geodesic path from $w_{p'}\cdot d'_{p'}\cdot d'_{j-1}(x_0)$ to $w_{p'}\cdot d'_{p'}\cdot d'_{j}(x_0)$ for $1\leq j \leq n$ and let $\gamma'\co [0,n]\to X$ be the path from $w_{p'}(x_0)$ to $v_{p'}(x_0)$ defined by $d'$ where $\gamma'(t)=\gamma'_{\left\lfloor t\right\rfloor}(t-\left\lfloor t\right\rfloor)$.

Then, $\Gamma=\Gamma_1\cup \Gamma' \cup \Gamma_2^{-1}$ is a path from $w_p(x_0)$ to $v_p(x_0)$. By bottleneck property, see \ref{Bottleneck}, there is a point $x\in \Gamma$ such that $d(x,m)\leq \Delta$. By the previous claim, we can assume that $x\in \gamma'_j[0,1]$ for some $1\leq j\leq n$. See Figure \ref{fig: D2}.

\begin{figure}
\centering 
\includegraphics[scale=0.4]{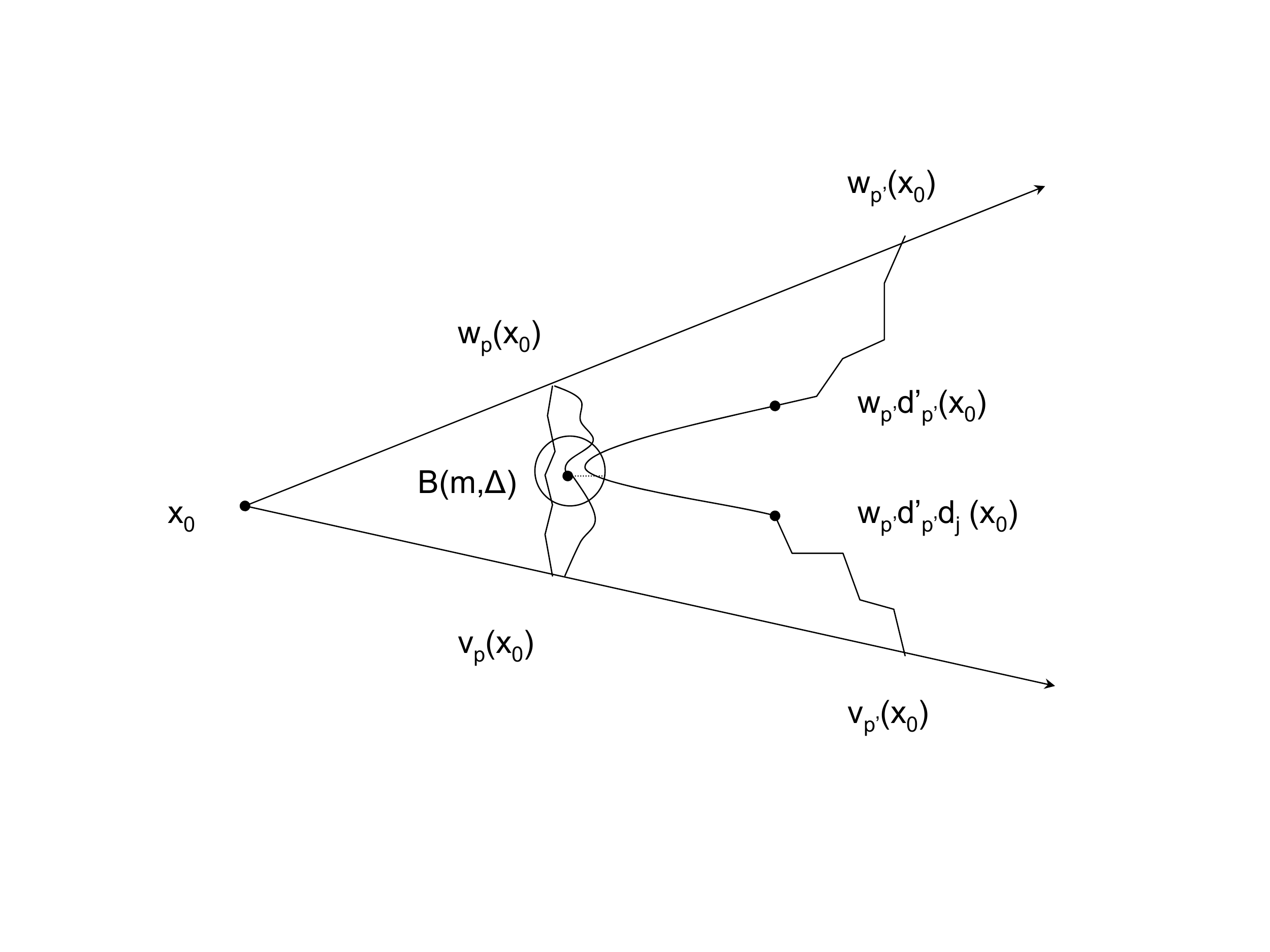}
\caption{By bottleneck property the path $[w_p(x_0),w_{p'}(x_0)]\cup [w_{p'}(x_0),v_{p'}(x_0)]\cup [v_{p'}(x_0),v_{p}(x_0)]$ intersects the ball $B(m,\Delta)$.}
\label{fig: D2}
\end{figure}

By hypothesis, $h(w_{p'}d'_{p'}),h(w_{p'}d'_{p'}d'_j)\in (D_2-C,D_2+C)$. Let $F=F(S,x_0)$ be a constant such that $\forall s\in S$, $d(x_0,s(x_0))\leq F$. Then, $d(w_{p'}d'_{p'}(x_0),w_{p'}d'_{p'}d'_j(x_0))\leq F$. Therefore, $d(w_{p'}d'_{p'}(x_0),m)\leq  \Delta +F$ and $d(w_pd'_pd'_j(x_0),m)\leq  \Delta +F $ which implies that 
$d(w_{p'}d'_{p'}(x_0),w_pd_p(x_0))\leq  2\Delta +2F$ and $d(w_pd'_pd'_j(x_0),w_pd_p(x_0))\leq  2\Delta +2F$ for some prefix $d_p$ of $d$. But since $h$ is bornologous on the action, there is some $S=S(2\Delta+2F)$ such that $|h(w_pd_p)-h(w_pd'_pd'_j)|<S$ and $D_2-D_1<S+2C$.

Thus, $D_2-D_1$ is bounded by a constant depending only on $h,C,F,\gamma_1,\gamma_2$ and $\Delta$ leading to contradiction.
\end{proof}

\begin{definicion} An action of a group by isometries on a metric space is \emph{metrically proper} if $\forall x\in X$ and $\forall R>0$ the set $\{g\in G\, | \, g(N(x,R))\cap N(x,R)\neq \emptyset\}$ is finite.
\end{definicion}

\begin{lema} An action of a group $G$ by isometries on a metric space $X$ is metrically proper if and only if $\forall x_0\in X$ and $\forall g\in G$ then $\forall R>0$ the set $\{g'\in G\, | \, g'x_0\in N(g(x_0),R)\}$ is finite.
\end{lema}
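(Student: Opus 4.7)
The plan is to prove both implications directly using the triangle inequality and the fact that $G$ acts by isometries. No hyperbolic geometry or Bottleneck property is needed here; it is essentially a restatement of the same finiteness condition under a change of viewpoint.

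For the forward direction, I would fix $x_0 \in X$, $g \in G$, and $R > 0$, and then show that the set $\{g' \in G \mid g' x_0 \in N(g(x_0),R)\}$ is finite. Given such a $g'$, since $g$ is an isometry, applying $g^{-1}$ gives $g^{-1}g'(x_0) \in N(x_0,R)$. Since also $x_0 \in N(x_0,R)$, we have $x_0 \in g^{-1}g'(N(x_0,R)) \cap N(x_0,R)$, so this intersection is non-empty. Metrical properness at $x_0$ with radius $R$ says that $\{h \in G \mid h(N(x_0,R)) \cap N(x_0,R) \neq \emptyset\}$ is finite, so the collection of such $g^{-1}g'$ is finite, and hence so is the collection of $g'$.

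For the reverse direction, fix $x \in X$ and $R > 0$, and suppose $g \in G$ satisfies $g(N(x,R)) \cap N(x,R) \neq \emptyset$. Pick $y$ in the intersection, so that $y = g(z)$ for some $z \in N(x,R)$ with $y \in N(x,R)$. Since $g$ is an isometry, $d(g(x),y) = d(g(x),g(z)) = d(x,z) < R$, and by the triangle inequality $d(g(x),x) \leq d(g(x),y) + d(y,x) < 2R$. Thus $g(x) \in N(x,2R)$. Applying the hypothesis with the base point $x_0 := x$, the group element taken to be the identity, and radius $2R$, the set $\{g' \in G \mid g'(x) \in N(x,2R)\}$ is finite, and it contains every $g$ of the form above. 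Hence the set $\{g \in G \mid g(N(x,R)) \cap N(x,R) \neq \emptyset\}$ is finite.

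There is no real obstacle in this argument; the only thing to watch is the factor of two appearing in the radius in the reverse direction, coming from the triangle inequality, and to remember that the distinguished element $g$ in the statement can be taken to be the identity when invoking the hypothesis.
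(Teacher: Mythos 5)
Your proof is correct and follows essentially the same route as the paper: both directions translate between the two finiteness conditions via the isometry property and the group structure, with the factor of $2R$ from the triangle inequality appearing in the reverse implication exactly as in the paper's argument. The only quibble is cosmetic: in the forward direction it is cleaner to observe that $g^{-1}g'(x_0)$ lies in $g^{-1}g'(N(x_0,R))\cap N(x_0,R)$, though your claim that $x_0$ itself lies in that intersection is also valid since $d(\left(g^{-1}g'\right)^{-1}(x_0),x_0)=d(x_0,g^{-1}g'(x_0))<R$.
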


\begin{proof} If the action is metrically proper, the set $\{h\in G\, | \, h(B(g(x_0),R))\cap B(g(x_0),R)\neq \emptyset\}$ is finite. If we consider $g'=hg$ then the set $\{g'\in G\, | \, g'g^{-1}B(g(x_0),R)\cap B(g(x_0),R)\neq \emptyset\}=\{g'\in G\, | \, g'B(x_0,R)\cap B(g(x_0),R)\neq \emptyset\}$ is finite. In particular, the set $\{g'\in G\, | \, g'x_0\in B(g(x_0),R)\}$ is finite.

Conversely, suppose that the set $\{g'\in G\, | \, g'x_0\in N(g(x_0),2R)\}$ is finite. Then, considering $h=g'g^{-1}$, $\{h\in G\, | \, (hg)(x_0)\cap N(g(x_0),2R))\neq \emptyset\}=\{h\in G\, | \, h(g(x_0))\cap N(g(x_0),2R))\neq \emptyset\}$ is finite which implies that the set $\{h\in G\, | \, h(N(g(x_0),R))\cap N(g(x_0),R))\neq \emptyset\}$ is finite.
\end{proof}

\begin{prop} Let $G$ be a group acting by isometries on a metric space $X$ and let $h\co G \to \br$ be a pseudocharacter. If the action is metrically proper then, the pseudocharacter is bornologous on the action.
\end{prop}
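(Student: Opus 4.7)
The statement is almost an immediate consequence of the preceding lemma, so my plan is short. The core observation is that bornologousness of $h$ on the action is a statement about the boundedness of $h$ on a certain set of group elements, and metric properness tells us precisely that that set is finite.

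More concretely, fix $x_0\in X$ and $g\in G$, and let $R>0$. By the preceding lemma, metric properness of the action is equivalent to the finiteness of
\[
A_{g,R}=\{g'\in G\,|\, g'(x_0)\in N(g(x_0),R)\}.
\]
Since this set is finite, the set of real numbers $\{h(g')-h(g)\,|\, g'\in A_{g,R}\}$ is a finite subset of $\br$ and so is bounded. Hence
\[
S:=\max_{g'\in A_{g,R}}|h(g')-h(g)|
\]
is a well-defined non-negative real number (replacing by $S+1$ if one wants $S>0$ strictly as in the definition). By construction, any $g'$ satisfying $g'(x_0)\in B(g(x_0),R)\subseteq N(g(x_0),R)$ lies in $A_{g,R}$ and so satisfies $|h(g')-h(g)|\leq S$, which is exactly the definition of being bornologous on the action.

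The only conceptual step is invoking the previous lemma to translate metric properness into finiteness of the orbit-preimage set $A_{g,R}$; once this is in place, no further estimates involving $h$ or the pseudocharacter identity are needed, since we only use that $h$ takes a single value on each group element. In particular, there is no genuine obstacle here: this proposition is essentially a bookkeeping consequence of the lemma, and serves to combine with Proposition \ref{Prop: bornologous} to give Corollary \ref{Cor: metrically proper-introd}.
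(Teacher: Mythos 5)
Your proof is correct and follows exactly the paper's own argument: invoke the preceding lemma to get finiteness of the set $K=\{g'\in G\,|\,g'(x_0)\in N(g(x_0),R)\}$ and take $S$ to be the maximum of $|h(g')-h(g)|$ over that finite set. No differences worth noting.
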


\begin{proof} Let $x_0\in X$, $g\in G$ and $R>0$. Since the action is metrically proper the set $K=\{g'\in G\, | \, g'x_0\in N(g(x_0),R)\}$ is finite. Therefore, it suffices to take $S:=\max_{g'\in K}\{|h(g')-h(g)|\}$.
\end{proof}

\begin{cor}\label{Cor: metrically proper} Let $G$ be a group acting by isometries on a quasi-tree $X$ so that the action is metrically proper. Let $g_1,g_2$ be two hyperbolic elements of $G$ such that $d_H(\Gamma_1(g_1,x_0,\gamma_1),\Gamma_2(g_2,x_0,\gamma_2))=\infty$. Then, if $h$ is a pseudocharacter such that 
$h(g_1)>0$ and $h(g_2)>0$ 
then, $g_1^\infty\not\sim g_2^\infty$ in $E(h)$.
\end{cor}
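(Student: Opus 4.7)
The plan is to simply combine Proposition \ref{Prop: bornologous} with the immediately preceding proposition stating that pseudocharacters on metrically proper actions are automatically bornologous on the action. Since the quasi-tree $X$ is in particular a metric space and $G$ acts on it by isometries, the hypothesis of metric properness feeds directly into that proposition and yields that the pseudocharacter $h$ in the statement is bornologous on the action.

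Once that is observed, every hypothesis of Proposition \ref{Prop: bornologous} is in place: $X$ is a quasi-tree, $G$ acts on $X$, $g_1, g_2$ are hyperbolic, $d_H(\Gamma_1(g_1,x_0,\gamma_1),\Gamma_2(g_2,x_0,\gamma_2)) = \infty$, and $h$ is a pseudocharacter with $h(g_1), h(g_2) > 0$ that is bornologous on the action. Therefore the conclusion $g_1^\infty \not\sim g_2^\infty$ in $E(h)$ follows directly.

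There is no real obstacle: the corollary is a one-line deduction chaining the two previous results. The only thing to check for cleanliness is that the prior proposition does not require any additional structure on $X$ (such as being a quasi-tree) beyond being a metric space with a metrically proper isometric action, which is indeed the case as stated. So the proof is essentially: invoke the bornologous-from-proper proposition, then invoke Proposition \ref{Prop: bornologous}.
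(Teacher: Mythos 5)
Your proof is correct and is exactly the deduction the paper intends: the preceding proposition shows that metric properness makes any pseudocharacter bornologous on the action, and then Proposition \ref{Prop: bornologous} applies verbatim. Nothing further is needed.
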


\begin{definicion} Two hyperbolic isometries $g_1,g_2$ are said to be \emph{independent} if their quasi-axis do not contain rays which are a finite Hausdorff distance apart. Equivalently the fixed point sets of $g_1,g_2$ in $\partial X$ are disjoint. An action is \emph{nonelementary} if there are at least two independent hyperbolic elements.
\end{definicion}

\begin{definicion} We say that a pseudocharacter $h\co G \to \br$ is \textbf{nonelementary} if there is a pair of independent $g_1,g_2\in G$ such that $h(g_1)\neq 0$ and $h(g_2)\neq 0$. 
\end{definicion}

\begin{cor}\label{Cor: nonelementary0} Consider a nonelementary action of a group $G$ on a quasi-tree $X$ and a nonelementary pseudocharacter $h\co G \to \br$. Then, if $h$ is bornologous on the action, it is bushy. 
\end{cor}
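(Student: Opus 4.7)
The plan is to unpack what bushiness means and deduce both inequalities $|E(h)^+|\geq 2$ and $|E(h)^-|\geq 2$ from Proposition~\ref{Prop: bornologous}, applied twice with signs normalized appropriately.

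First, nonelementariness of $h$ furnishes independent hyperbolic $g_1,g_2 \in G$ with $h(g_1),h(g_2)\neq 0$. Since $g_i$ and $g_i^{-1}$ share the same quasi-axis (up to reversing orientation) and the same fixed-point set in $\partial X$, the pair $\{g_1^{\pm 1},g_2^{\pm 1}\}$ remains independent and hyperbolic for any choice of signs. Replacing $g_i$ by $g_i^{-1}$ if necessary, I may therefore assume $h(g_1)>0$ and $h(g_2)>0$. Independence also forces the full quasi-axes $\Gamma_1,\Gamma_2$ to be at infinite Hausdorff distance: if $d_H(\Gamma_1,\Gamma_2)<\infty$, then in particular the two positively oriented subrays would be a finite Hausdorff distance apart, contradicting the definition of independence. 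Applying Proposition~\ref{Prop: bornologous} now yields $g_1^\infty\not\sim g_2^\infty$ in $E(h)$, and since both infinite words have sign $+1$ (because $h(g_i)>0$), this gives $|E(h)^+|\geq 2$.

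For $|E(h)^-|\geq 2$, I run the symmetric argument using the pseudocharacter $-h$. Note that $-h$ is again a pseudocharacter, is still bornologous on the action (the defining inequality is symmetric in sign), and is nonelementary with the same witnesses $g_1,g_2$. Applying Proposition~\ref{Prop: bornologous} to $-h$ together with $g_1^{-1},g_2^{-1}$ (so that $(-h)(g_i^{-1})=h(g_i)>0$ and the quasi-axes are still at infinite Hausdorff distance) produces $(g_1^{-1})^\infty\not\sim (g_2^{-1})^\infty$ in $E(-h)$. But the underlying set and equivalence relation of $E(-h)$ coincide with those of $E(h)$: negating the pseudocharacter only swaps which infinite words escape to $\pm\infty$, and a direct check of the definition of $\sim_C$ shows that the bijection $D\leftrightarrow -D$ turns the $-h$-equivalence into the $h$-equivalence. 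Thus $E(-h)^+=E(h)^-$, and the two inequivalent words provide two distinct elements of $E(h)^-$.

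Hence $h$ is neither uniform nor unipotent, so it is bushy. The only mildly delicate point is the observation that independence of $g_1,g_2$ upgrades from ``no rays at finite Hausdorff distance'' to $d_H(\Gamma_1,\Gamma_2)=\infty$ on full biinfinite quasi-axes; the rest is a direct double invocation of Proposition~\ref{Prop: bornologous} together with the sign-reversal symmetry $h\leftrightarrow -h$.
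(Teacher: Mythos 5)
Your argument is correct and is essentially the derivation the paper intends: the corollary is stated there without proof as a direct consequence of Proposition \ref{Prop: bornologous}, and you supply exactly the missing details (sign normalization via $g_i\mapsto g_i^{-1}$, independence forcing $d_H(\Gamma_1,\Gamma_2)=\infty$, and the identification $E(-h)^+=E(h)^-$ handling the negative classes, which the paper's parallel argument in Theorem \ref{Tma: bushy} dismisses with ``the same argument''). One cosmetic remark: if $d_H(\Gamma_1,\Gamma_2)<\infty$ it need not be the two \emph{positively} oriented subrays that fellow-travel (the positive ray of one axis could track the negative ray of the other), but since the paper's definition of independence forbids \emph{any} pair of rays at finite Hausdorff distance --- equivalently, requires disjoint fixed-point sets in $\partial X$ --- your contradiction stands unchanged.
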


\begin{cor}\label{Cor: nonelementary} Consider a nonelementary action of a group $G$ on a quasi-tree $X$. If the action is metrically proper then every nonelementary pseudocharacter is bushy. 
\end{cor}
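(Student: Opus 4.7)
The proof is essentially a two-line synthesis of the two preceding results. First, by the proposition immediately preceding this corollary, metric properness of the action forces every pseudocharacter on $G$ to be bornologous on the action; in particular $h$ is. Together with the nonelementarity hypotheses on the action and on $h$, this puts us exactly in the setting of Corollary \ref{Cor: nonelementary0}, from which the bushiness of $h$ follows at once.

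For transparency I would also spell out briefly how Corollary \ref{Cor: nonelementary0} itself reduces to Proposition \ref{Prop: bornologous}, since that is where the genuine content sits. By nonelementarity of $h$, pick independent hyperbolic $g_1,g_2\in G$ with $h(g_i)\neq 0$. Since $h$ is a pseudocharacter, $h(g_i^{-1})=-h(g_i)$, and swapping $g_i$ for $g_i^{-1}$ only reverses the orientation of the quasi-axis and preserves independence; so I may arrange $h(g_1),h(g_2)>0$. Independence of $g_1,g_2$ means their fixed points in $\partial X$ are disjoint, which forces $d_H(\Gamma_1,\Gamma_2)=\infty$. Proposition \ref{Prop: bornologous} then yields $g_1^\infty\not\sim g_2^\infty$ in $E(h)$, producing two classes in $E(h)^+$. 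Running the same argument with $-h$ in place of $h$ and the pair $g_1^{-1},g_2^{-1}$ (still independent hyperbolic, with $(-h)(g_i^{-1})=h(g_i)>0$, and $-h$ still bornologous on the action) produces two classes in $E(-h)^+$, and hence, via the obvious identification $E(-h)^+=E(h)^-$, two classes in $E(h)^-$.

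The only mild bookkeeping point is ensuring the two-sidedness: nonelementarity of $h$ alone gives distinct classes of one sign, but to conclude \emph{bushy} rather than merely non-uniform one must rule out the unipotent case, which is why the argument is run twice (once for $h$, once for $-h$). Beyond this, no real obstacle arises: everything hard has already been packaged into Proposition \ref{Prop: bornologous} and the metrically-proper-implies-bornologous lemma, so the corollary is essentially formal.
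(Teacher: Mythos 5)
Your proposal is correct and follows exactly the route the paper intends: metric properness gives bornologousness by the preceding proposition, and then Corollary \ref{Cor: nonelementary0} (itself an instance of Proposition \ref{Prop: bornologous}, using that independence forces the quasi-axes to be at infinite Hausdorff distance) yields two classes of each sign. Your care about running the argument for both signs to exclude the unipotent case mirrors the paper's own handling of the analogous step in Theorem \ref{Tma: bushy}.
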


\section{Existence of bushy pseudocharacters}

Note that any two $(K-L)$-\emph{quasi-axis} of $g$ are within some universal $B=B(\delta,K,L)$ of one another and any sufficiently long $(K,L)$-quasi-geodesic arc $J$ in a $B$-neighborhood of a quasi-axis $l$ of $g$ inherits a natural $g$-orientation: a  point of $l$ within $B(\delta,K,L)$ of the terminal endpoint of $J$ is ahead (with trepect to the $g$-orientation of $l$) of a point of $l$ within $B(\delta,K,L)$ of the initial endpoint of $J$. 
This orientation of $J$ will be denoted by $g$-orientation of $J$.

\begin{definicion}\cite{BF}\label{def: sim} If  $g_1,g_2$ are hyperbolic elements of $G$, let $g_1\sim g_2$ if for an arbitrarily long segment $J$ in a $(K,L)$-quasi-axis for $g_1$ there is a $g\in G$ such that $g(J)$ is within $B(\delta,K,L)$ of a $(K,L)$-quasi-axis of $g_2$ and $g\co J\to g(J)$ is orientation-preserving with respect to the $g_2$-orientation on $g(J)$.
\end{definicion}

This defines an equivalence relation. As it is said in \cite{BF}, the concept does not change if $B$ is replaced by a larger constant. 

\begin{definicion} A \emph{Bestvina-Fujiwara} action is a nonelementary action of a group $G$ on a hyperbolic graph $X$ so that there exist independent $g_1, g_2$ such that $g_1\not \sim g_2$.
\end{definicion}

\begin{lema} \label{lema: divergent} Let $X$ be a (geodesic) Gromov hyperbolic space and let $G$ be a group acting by isometries on $X$. Let $g_1,g_2\in G$ such that $g_1\not \sim g_2$. Then, for any point $x_0\in X$, and any pair of geodesics $\gamma_1:=[x_0,g_1(x_0)],\gamma_2:=[x_0,g_2(x_0)]$, $d_H(\Gamma_1(g_1,x_0,\gamma_1),\Gamma_2(g_2,x_0,\gamma_2))=\infty$.
\end{lema}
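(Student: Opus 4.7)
The plan is to argue by contrapositive: assume $d_H(\Gamma_1, \Gamma_2) \leq M < \infty$ for some constant $M$, and conclude that $g_1 \sim g_2$ in the sense of Definition \ref{def: sim}. The main tools are stability of quasi-geodesics (Lemma \ref{Lemma: stab}) and a standard consequence of $\delta$-hyperbolicity, namely that two bi-infinite quasi-geodesics within finite Hausdorff distance share both endpoints in $\partial_\infty X$.

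First, I would show that $\Gamma_1$ and $\Gamma_2$ have the same pair of endpoints in $\partial_\infty X$: any sequence $x_n \in \Gamma_1$ escaping to an end admits a shadowing sequence $y_n \in \Gamma_2$ with $d(x_n, y_n) \leq M$, and stability of the Gromov product under bounded perturbations forces $(x_n|y_n)_o \to \infty$, so that $y_n$ converges in $\partial_\infty X$ to the same point as $x_n$. Applying this at both ends of $\Gamma_1$ identifies its endpoints with those of $\Gamma_2$.

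Second, to verify the equivalence, I would, given any arbitrarily long subsegment $J \subset \Gamma_1$, take $g = e$ as the witness element in Definition \ref{def: sim}. Then $g(J) = J$ lies within distance $M + B(K, L, \delta)$ of the $(K,L)$-quasi-axis $\Gamma_2$ of $g_2$, where $B(K, L, \delta)$ is the stability constant from Lemma \ref{Lemma: stab}; since the definition of $\sim$ is insensitive to replacing the ambient neighborhood constant with a larger one, the neighborhood condition is met. For orientation preservation, since both $g_1$ and $g_2$ fix the two common endpoints in $\partial_\infty X$ and act as nontrivial translations along the shared asymptotic direction, the $g_1$-orientation on $J$ and the $g_2$-orientation inherited near $\Gamma_2$ can be compared by tracking which of the two shared endpoints is attracting.

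The main obstacle I anticipate is precisely this orientation-preservation step: a priori $g_1$ and $g_2$ could translate in opposite directions along the common asymptotic axis, so $g = e$ need not induce the $g_2$-orientation on $g(J)$. Overcoming this requires either reading Definition \ref{def: sim} so that the choice of attracting endpoint is part of the data (so that, after reparameterizing $\Gamma_2$ if necessary, the orientations agree), or producing an explicit $g \in G$ effecting the required orientation flip, for instance by combining suitable powers of $g_1$ and $g_2$ whose products act as a translation in the correct direction along the common axis.
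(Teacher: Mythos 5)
Your overall route --- contrapositive, finite Hausdorff distance, and the identity as the witness element in Definition \ref{def: sim} --- is exactly the paper's route, and your first two steps (shared endpoints at infinity via the Gromov product, and the neighborhood condition via Lemma \ref{Lemma: stab} plus the insensitivity of $\sim$ to enlarging the constant) are fine. The difference is that you stop and flag the orientation step as unresolved, whereas the paper dismisses it with the phrase ``obviously orientation preserving.'' Your worry is the correct one, and it is a genuine gap --- in both your write-up and the paper's.

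Concretely: neither of your two proposed escapes closes it. The $g_2$-orientation on a quasi-axis is not a free choice of data --- it is pinned down by the requirement that $g_2$ act as a \emph{positive} translation --- so ``reparameterizing $\Gamma_2$'' amounts to replacing $g_2$ by $g_2^{-1}$, which changes the statement. And there is no reason for $G$ to contain an element flipping the common axis. What the argument actually proves is the disjunction: if $d_H(\Gamma_1,\Gamma_2)<\infty$ then $g_1\sim g_2$ \emph{or} $g_1\sim g_2^{-1}$, according to whether the translation directions agree or disagree. Under the stated hypothesis $g_1\not\sim g_2$ alone this is not a contradiction; indeed the statement as written fails for $g_2=g_1^{-1}$ when $g_1\not\sim g_1^{-1}$ (the very situation Bestvina--Fujiwara care about), since then the two quasi-axes coincide up to bounded Hausdorff distance. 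The honest repair is to strengthen the hypothesis to $g_1\not\sim g_2^{\pm1}$, which is exactly what is available where the lemma is used: the elements produced by Proposition \ref{Prop2: BF} and fed into Theorem \ref{Tma: bushy} satisfy $f_i\not\sim f_j^{\pm1}$, so either branch of the disjunction yields the desired contradiction. You should state and prove that version rather than try to force $g=e$ to be orientation-preserving.
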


\begin{proof} Suppose there is some $B>0$ such that $\Gamma_1\subset N_B(\Gamma_2)$ and $\Gamma_2\subset N_B(\Gamma_1)$. Then, the action of the identity $i$ on $\Gamma_1$ is contained in $N_B(\Gamma_2)$ and $i\co \Gamma_1 \to i(\Gamma_1)=\Gamma_1$ is obviously orientation preserving with respect to the $g_1$-orientation and the $g_2$-orientation on $\Gamma_1$. This contradicts the fact that $g_1\not \sim g_2$. 
\end{proof}

Let us recall now the basic construction of quasi-homomorphisms associated to the action as presented in \cite{F1}.

Let $X$ be a hyperbolic graph and a group $G$ acting on $X$.
Let $w$ be a finite (oriented) path in $X$. By $|w|$ denote the lenght of $w$, by $i(w)$ the starting point and by $t(w)$ the finishing point. For $g\in G$, $g\circ w$ is a path starting at $g(i(w))$ and finishing at $g(t(w))$ and it is called a \emph{copy} of $w$. Obviously, $|g\circ w|=|w|$.

Let $\alpha$ be a finite path. Define \[|\alpha|_w=\{\mbox{the maximal number of non-overlapping copies of $w$ in $\alpha$}\}.\]

Suppose that $x,y\in X$ are two vertices and that $W$ is an integer with $0<W<|w|$. Then, $$c_{w,W}(x,y)=d(x,y)-\inf_\alpha(|\alpha|-W|\alpha|_w),$$ where $\alpha$ ranges over all paths from $x$ to $y$. 

\begin{lema}\cite[Lemma 3.4]{F1} \label{lema: 3.4 F1} Let $x,\, y,\, z$ be three points in $X$. Then 
\[|c_{w,W}(x,y)-c_{w,W}(x,z)|\leq 2d(y,z).\]\hfill$\blacksquare$
\end{lema}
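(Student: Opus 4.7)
The plan is to prove the inequality by concatenating paths with a short geodesic. The key observation is that extending a path by a geodesic of length $d(y,z)$ changes the quantity $|\alpha| - W|\alpha|_w$ by at most $d(y,z)$, because the length grows by exactly $d(y,z)$ while the count of non-overlapping copies of $w$ cannot decrease.

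Concretely, I would first fix $\varepsilon>0$ and pick a path $\alpha$ from $x$ to $y$ with $|\alpha|-W|\alpha|_w < \inf_{\alpha'}(|\alpha'|-W|\alpha'|_w)+\varepsilon$, then let $\beta$ be a geodesic from $y$ to $z$ and set $\alpha\cup\beta$ to be the concatenated path from $x$ to $z$. Observe that
\[ |\alpha\cup\beta| = |\alpha|+d(y,z), \qquad |\alpha\cup\beta|_w \geq |\alpha|_w, \]
so $|\alpha\cup\beta|-W|\alpha\cup\beta|_w \leq |\alpha|-W|\alpha|_w + d(y,z)$. Taking the infimum over all paths from $x$ to $z$ and then letting $\varepsilon\to 0$ yields
\[ \inf_{\alpha'}(|\alpha'|-W|\alpha'|_w) \leq \inf_{\alpha}(|\alpha|-W|\alpha|_w) + d(y,z), \]
where the right infimum is over paths from $x$ to $y$.

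Unpacking the definitions of $c_{w,W}$, this gives
\[ c_{w,W}(x,z) \geq d(x,z) - d(x,y) + c_{w,W}(x,y) - d(y,z) \geq c_{w,W}(x,y) - 2d(y,z), \]
where the second inequality uses the triangle inequality $d(x,z)\geq d(x,y)-d(y,z)$. Swapping the roles of $y$ and $z$ (prolonging a near-optimal path from $x$ to $z$ by a geodesic from $z$ to $y$) yields the reverse bound, and the lemma follows.

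There is no real obstacle here: the argument is a direct manipulation. The only point to keep in mind is that concatenation only increases (weakly) the count of non-overlapping copies of $w$, which is why the bound on $|\alpha|-W|\alpha|_w$ is $+d(y,z)$ rather than something worse involving $W$.
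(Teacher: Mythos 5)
Your argument is correct: the concatenation trick gives $\inf_{\alpha'}(|\alpha'|-W|\alpha'|_w)$ the Lipschitz bound you claim, and combining it with the triangle inequality yields the stated estimate. The paper quotes this lemma from Fujiwara's article without proof, and your argument is essentially the standard one given there, so there is nothing to compare beyond noting the match.
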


Let us omit $W$ from the notation and write $c_w$. Define $h_{w}\co G\to \br$ by 
$$h_{w}(g)=c_{w}(x_0,g(x_0))-c_{w^{-1}}(x_0,g(x_0)).$$ 

\begin{prop}\cite[Proposition 3.10]{F1} If $X$ is a $\delta$-hyperbolic space $h_w$ is a  quasi-homomorphism (i. e. quasicharacter).
\end{prop}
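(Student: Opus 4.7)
The plan is to bound the defect
\[\delta h_w(g_1,g_2) \;=\; h_w(g_1) + h_w(g_2) - h_w(g_1g_2)\]
uniformly in $g_1,g_2\in G$. Write the ``triangle slack'' as $\Gamma(x,y,z)=d(x,y)+d(y,z)-d(x,z)\geq 0$. The first observation is that, because $G$ acts by isometries and permutes copies of $w$ among themselves (a copy is by definition some $g\circ w$), the function $c_w$ is $G$-invariant: $c_w(gx,gy)=c_w(x,y)$ for every $g\in G$, and similarly for $c_{w^{-1}}$. Taking $y=g_1(x_0)$ and $z=g_1g_2(x_0)$ gives $c_w(g_1 x_0,g_1 g_2 x_0)=c_w(x_0,g_2 x_0)$, so $\delta h_w(g_1,g_2)$ reduces to the triangle defect of $c_w-c_{w^{-1}}$ at the vertices $x_0$, $g_1 x_0$, $g_1g_2 x_0$.

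The core of the proof is a two-sided triangle estimate for $c_w$ (and the analogous one for $c_{w^{-1}}$):
\[\Gamma(x,y,z) - C \;\leq\; c_w(x,y) + c_w(y,z) - c_w(x,z) \;\leq\; \Gamma(x,y,z),\]
where $C=C(\delta,|w|,W)$. The upper bound is elementary. For any paths $\alpha$ from $x$ to $y$ and $\beta$ from $y$ to $z$, the concatenation $\alpha\cdot\beta$ is a path from $x$ to $z$ with $|\alpha\cdot\beta|_w\geq|\alpha|_w+|\beta|_w$, whence the infimum in the definition of $c_w(x,z)$ is at most $(|\alpha|+|\beta|)-W(|\alpha|_w+|\beta|_w)$; taking infima over $\alpha$ and $\beta$ and rearranging produces the upper bound.

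The lower bound is where $\delta$-hyperbolicity is essential. Starting from a near-optimal path $\gamma$ from $x$ to $z$ in the definition of $c_w(x,z)$, I would use thinness of the geodesic triangle $xyz$ to pick a point $y'$ on $\gamma$ within $O(\delta)$ of $y$, cut $\gamma$ at $y'$, and repair the two halves with short bridges of length $O(\delta)$ to obtain genuine paths $\alpha$ from $x$ to $y$ and $\beta$ from $y$ to $z$. The key combinatorial point is that since every copy of $w$ has the fixed length $|w|$, only a bounded number (depending on $\delta$ and $|w|$) of copies occurring in $\gamma$ can meet the small neighborhood around $y'$ where the surgery takes place, so $|\alpha|_w+|\beta|_w\geq|\gamma|_w-O(1)$. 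Combined with the length estimate $|\alpha|+|\beta|\leq|\gamma|+O(\delta)$ and the identity $d(x,y)+d(y,z)=d(x,z)+\Gamma$, taking the infimum over $\gamma$ yields the lower bound.

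Once the two-sided estimates hold for both $c_w$ and $c_{w^{-1}}$, I subtract: the potentially large term $\Gamma(x_0,g_1 x_0,g_1g_2 x_0)$ appears with the same sign in both and cancels in $h_w=c_w-c_{w^{-1}}$, leaving $|\delta h_w(g_1,g_2)|\leq C+C'$ for constants depending only on $\delta$, $|w|$, and $W$; hence $h_w$ is a quasi-character. The main obstacle I anticipate is precisely the lower triangle bound: one must quantify how many copies of $w$ can be destroyed by performing the surgery at a near but off-path point $y'$, and this is the step where $\delta$, $|w|$, and $W$ interact nontrivially. Lemma~\ref{lema: 3.4 F1} should be a useful tool in controlling the cost of the bridge segments.
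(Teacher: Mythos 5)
Your reduction via $G$-invariance of $c_w$ is correct, and so is the upper bound $c_w(x,y)+c_w(y,z)-c_w(x,z)\leq \Gamma(x,y,z)$ by concatenation. But the lower bound $c_w(x,y)+c_w(y,z)-c_w(x,z)\geq \Gamma(x,y,z)-C$, which carries the whole weight of your cancellation argument, is false. Take $X=\br$ with $G=\bz$ acting by translations, $w$ the forward path from $0$ to $2$, $W=1$, and $x=0$, $y=2N$, $z=0$. Then $\Gamma(x,y,z)=4N$, while $c_w(x,y)\approx N$, $c_w(y,z)=0$ (a path traversed backwards contains copies of $w^{-1}$, not of $w$) and $c_w(x,z)=0$, so the defect is about $N$, not $4N-O(1)$. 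The underlying geometric error also appears in your surgery step: thinness of the triangle $xyz$ does not place the vertex $y$ within $O(\delta)$ of a (near-)geodesic $\gamma$ from $x$ to $z$; the distance from $y$ to $[x,z]$ is roughly the Gromov product $(x|z)_y=\frac{1}{2}\Gamma(x,y,z)$, which is unbounded. So there is no point $y'$ on $\gamma$ near $y$ at which to cut, and the estimate $|\alpha|+|\beta|\leq|\gamma|+O(\delta)$ cannot hold (it would violate the triangle inequality by about $\Gamma$).

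The actual reason $h_w$ is a quasimorphism is a cancellation you never invoke: the reversal symmetry $c_w(a,b)=c_{w^{-1}}(b,a)$, coming from the fact that reversing a path turns copies of $w$ into copies of $w^{-1}$. The proof in \cite{F1} runs as follows. First, paths nearly realizing the infimum in the definition of $c_w$ are uniform quasi-geodesics (since $|\alpha|-W|\alpha|_w\geq(1-W/|w|)|\alpha|$ and the same holds for subpaths up to an additive constant), hence by stability they fellow-travel geodesics. This yields coarse additivity: if $p$ lies within bounded distance of a geodesic from $x$ to $z$, then $c_w(x,z)=c_w(x,p)+c_w(p,z)+O(1)$; here is where Lemma~\ref{lema: 3.4 F1} and the counting of copies of $w$ near the cut point enter. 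Applying this at an approximate center $p$ of the thin triangle with vertices $x_0$, $g_1x_0$, $g_1g_2x_0$ (a point within $O(\delta)$ of all three sides) gives $T_w:=c_w(x,y)+c_w(y,z)-c_w(x,z)=c_w(p,y)+c_w(y,p)+O(1)$, and likewise $T_{w^{-1}}=c_{w^{-1}}(p,y)+c_{w^{-1}}(y,p)+O(1)$. By the reversal symmetry these two quantities agree up to $O(1)$, so $\delta h_w(g_1,g_2)=T_w-T_{w^{-1}}$ is uniformly bounded. Note that $T_w$ itself is roughly $c_w(p,y)+c_w(y,p)$, not $\Gamma(x,y,z)$; neither $c_w$ nor $c_{w^{-1}}$ alone gives a quasimorphism, only their difference does.
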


From Lemma \ref{lema: 3.4 F1}, it is immediate to obtain the following.

\begin{lema}\label{lema: bounded} Let $X$ be a (geodesic) Gromov hyperbolic space. For any word $w$, any points $x_0,x\in X$ and any constants $0<W<|w|$, $R>0$, the subset of the real line $\{h_{w}(g) \, \co \, g(x_0)\in B(x,R)\}$ is bounded. In particular, it has diameter at most $8R$. 
\end{lema}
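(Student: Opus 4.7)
The plan is to derive the bound directly from Lemma \ref{lema: 3.4 F1} applied to each of the two summands defining $h_w$.

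First I would pick two arbitrary elements $g, g' \in G$ with $g(x_0), g'(x_0) \in B(x,R)$. By the triangle inequality $d(g(x_0), g'(x_0)) \leq 2R$. The key observation is that in the definition $h_w(g) = c_w(x_0, g(x_0)) - c_{w^{-1}}(x_0, g(x_0))$, the first argument of both $c_w$ and $c_{w^{-1}}$ is the fixed base point $x_0$, while only the second argument depends on $g$. This is exactly the configuration covered by Lemma \ref{lema: 3.4 F1}, so I can write
\[
|c_w(x_0, g(x_0)) - c_w(x_0, g'(x_0))| \leq 2\, d(g(x_0), g'(x_0)) \leq 4R,
\]
and identically
\[
|c_{w^{-1}}(x_0, g(x_0)) - c_{w^{-1}}(x_0, g'(x_0))| \leq 4R.
\]

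Combining these two inequalities via the triangle inequality for the real line yields $|h_w(g) - h_w(g')| \leq 8R$. Since this holds for every pair $g, g'$ whose images of $x_0$ lie in $B(x, R)$, the set $\{h_w(g) : g(x_0) \in B(x, R)\}$ has diameter at most $8R$, which in particular makes it a bounded subset of $\br$.

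I do not foresee any real obstacle here: the statement is essentially a direct consequence of Lemma \ref{lema: 3.4 F1}, once one notices that $x_0$ plays the role of the fixed first argument in both counting functions. The only small point to verify is that Lemma \ref{lema: 3.4 F1} applies with $W$ fixed to the same value in both terms, which is the case since $W$ is implicit in our notation $c_w$ and is chosen once and for all (with the same $W$ also valid for $w^{-1}$ since $|w^{-1}| = |w|$).
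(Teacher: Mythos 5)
Your proof is correct and is exactly the argument the paper intends: the paper simply asserts the lemma is "immediate" from Lemma \ref{lema: 3.4 F1}, and your derivation (two applications of that lemma, one for $c_w$ and one for $c_{w^{-1}}$, with $d(g(x_0),g'(x_0))\leq 2R$) is the standard way to fill that in and accounts precisely for the stated $8R$ bound.
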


Therefore, the following is immediate.

\begin{prop}\label{prop: bounded} Let $X$ be a (geodesic) Gromov hyperbolic space. For any word $w$, any points $x_0,x\in X$ and any constants $0<W<|w|$, $h_w$ is bornologous on the action.
\end{prop}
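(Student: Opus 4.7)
The plan is to deduce this almost directly from Lemma \ref{lema: bounded}, which already packages the essential hyperbolic-geometry content. Unravelling the definition of bornologous on the action, one needs: for every fixed $g\in G$ and every $R>0$, a constant $S>0$ such that whenever $g'(x_0)\in B(g(x_0),R)$, one has $|h_w(g')-h_w(g)|\le S$.

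The natural move is to instantiate Lemma \ref{lema: bounded} with the specific choice $x:=g(x_0)$. That lemma asserts that the set
\[
\bigl\{h_w(g')\ :\ g'(x_0)\in B(g(x_0),R)\bigr\}\subset\br
\]
has diameter at most $8R$. Observe that $g$ itself lies in the indexing set, since trivially $g(x_0)\in B(g(x_0),R)$. Consequently, any $g'$ with $g'(x_0)\in B(g(x_0),R)$ satisfies $|h_w(g')-h_w(g)|\le 8R$, and so $S:=8R$ works uniformly. This is the statement of bornologous on the action.

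There is essentially no obstacle: the proposition is a direct reformulation of Lemma \ref{lema: bounded} once one identifies the reference point $x$ with the orbit point $g(x_0)$. The only thing worth flagging is that the definition of bornologous on the action was phrased for pseudocharacters, whereas $h_w$ is a priori only a quasicharacter; but the definition uses only the values of $h_w$ on $G$ and makes sense verbatim in this more general setting, and one may if desired replace $h_w$ by its associated pseudocharacter via Remark \ref{nota: pseudocharacter} at the cost of a bounded additive error, which does not affect the bornologous property.
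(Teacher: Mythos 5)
Your proof is correct and matches the paper's intent exactly: the paper gives no explicit argument, simply declaring the proposition ``immediate'' from Lemma \ref{lema: bounded}, and your instantiation $x:=g(x_0)$ with $S:=8R$ is precisely the intended deduction. The remark about $h_w$ being only a quasicharacter is a sensible clarification but does not change anything.
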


Let us recall two propositions from \cite{BF}.

\begin{prop}\cite[Proposition 2]{BF}\label{Prop2: BF} Suppose a group $G$ acts on a $\delta$-hyperbolic graph $X$ by isometries. Suppose also that the action is nonelementary and that there exist independent hyperbolic elements $g_1,g_2\in G$ such that $g_1 \not \sim g_2$. Then, there is a sequence $f_1,f_2,... \in G$ of hyperbolic elements such that 
\begin{itemize}
\item $f_i \not \sim f_i^{-1}$ for $i=1,2,...$ and
\item $f_i \not \sim f_j^{\pm 1}$ for $j<i$.
\end{itemize}
\end{prop}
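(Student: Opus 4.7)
My plan is to construct the $f_i$ as explicit hyperbolic words in $g_1$ and $g_2$ whose quasi-axes encode combinatorially distinguishable \emph{syllable patterns}, and then to use the hypothesis $g_1 \not\sim g_2$ to show that any $\sim$-equivalence $f_i \sim f_j^\epsilon$ with $(i,\epsilon)\neq (j,+1)$ would force a matching of these patterns that does not exist. The first step is a ping-pong argument: using the independence of $g_1,g_2$ and nonelementarity, I replace them by sufficiently large powers $a := g_1^N$, $b := g_2^N$, so that $\langle a,b\rangle$ is free of rank two and every cyclically reduced word in $a^{\pm 1}, b^{\pm 1}$ represents a hyperbolic element. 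By Lemma~\ref{Lemma: stab}, the quasi-axis $\ell(w)$ of such a word $w$ then lies within a uniform Hausdorff distance $B$ of the concatenation of translates of the $a$- and $b$-quasi-axes corresponding to the syllables of $w$. Note also that $a \not\sim b$, since passing to positive powers does not affect quasi-axes or their orientations, and $\sim$ depends only on this data.

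Set
\[ f_i \;:=\; a\, b\, a^2\, b\, a^3\, b\,\cdots\, a^i\, b \qquad (i\geq 3), \]
so that $\ell_i := \ell(f_i)$, read in the $f_i$-orientation, is (up to the error $B$) a cyclic concatenation of $a$-syllables of lengths $1,2,\ldots,i$ separated by unit $b$-syllables. Suppose for contradiction that $f_i \sim f_j^\epsilon$ for some $(i,\epsilon) \neq (j,+1)$. Then for arbitrarily long $J \subset \ell_i$ there exists $g \in G$ with $g(J) \subset N_B(\ell_j^\epsilon)$ orientation-preservingly with respect to the $f_i$-orientation on $J$ and the $f_j^\epsilon$-orientation on $g(J)$.

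The crux is a \emph{syllable-preservation} step: each sufficiently long $a$-syllable of $J$, being an arbitrarily long sub-arc of a translate of the $a$-quasi-axis, must under $g$ map (up to $B$-error) onto an $a^{\pm 1}$-syllable of $\ell_j^\epsilon$ rather than onto a $b^{\pm 1}$-syllable or across a syllable boundary. Indeed, a long $a$-axis arc sitting within $B$ of a $b^{\pm 1}$-syllable of $\ell_j^\epsilon$ would witness $a \sim b^{\pm 1}$, contradicting $a \not\sim b$; and stability of quasi-geodesics (Lemma~\ref{Lemma: stab}) prevents it from lying within $B$ of more than one syllable of $\ell_j^\epsilon$ for arbitrarily long stretches. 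The orientation-preserving hypothesis then upgrades these matchings to identifications with $a$-syllables of matching length and $a$-orientation, and similarly for $b$-syllables. Consequently the oriented cyclic syllable-pattern of $f_i$ coincides with that of $f_j^\epsilon$. For $\epsilon = +1$ and $j < i$ this is impossible since $f_j$ has no $a$-syllable of length $i$. For $\epsilon = -1$, reading the pattern with the $f_j^{-1}$-orientation reverses the cyclic sequence of $a$-lengths, and for $i\ge 3$ the sequences $1,2,\ldots,i$ and $i,i-1,\ldots,1$ (both cyclic) are distinct, yielding the desired contradiction.

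The main obstacle is the syllable-preservation step — in particular, ruling out that a long $a$-axis arc might traverse $N_B(\ell_j^\epsilon)$ by ``hopping'' between $a^{\pm 1}$- and $b^{\pm 1}$-syllables rather than sitting inside a single one. This requires combining stability of quasi-geodesics with the hypothesis $g_1 \not\sim g_2$, together with careful bookkeeping of how the orientation-preserving condition interacts with the $\pm 1$ exponents on syllables (for example, ensuring that the match cannot send long $a$-pieces onto $a^{-1}$-pieces of the target). Once this step is established, the remainder of the argument reduces to elementary combinatorics on cyclic sequences of positive integers, and yields the infinite family $\{f_i\}_{i\geq 3}$ with the required non-equivalence properties.
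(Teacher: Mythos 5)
The paper offers no proof of this statement: it is quoted directly from Bestvina--Fujiwara \cite[Proposition 2]{BF}, so there is nothing internal to compare your argument with. Your reconstruction is in the general spirit of their approach (pass to a Schottky pair, build explicit positive words, match syllables along quasi-axes), but it has a genuine gap located exactly where the proposition has content.

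The problem is the syllable-preservation step together with the orientation bookkeeping when $\epsilon=-1$. The relation of Definition~\ref{def: sim} is orientation-sensitive: the quasi-axis of $h^{-1}$ is that of $h$ with reversed orientation, so $a\not\sim b$ does \emph{not} exclude $a\sim b^{-1}$, nor does it exclude $a\sim a^{-1}$ or $b\sim b^{-1}$ (the only automatic consequence is $a^{-1}\not\sim b^{-1}$, obtained by reversing all orientations). Hence your assertion that a long $a$-axis arc lying within $B$ of a $b^{\pm1}$-syllable of $\ell_j^{\epsilon}$ ``would witness $a\sim b^{\pm1}$, contradicting $a\not\sim b$'' is a contradiction only for the sign $+1$. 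When $\epsilon=-1$, the orientation-preserving hypothesis aligns the image of an $a$-syllable with the $f_j^{-1}$-orientation, so landing on a $b$-syllable witnesses $a\sim b^{-1}$ and landing on an $a$-syllable witnesses $a\sim a^{-1}$; neither is ruled out by the hypotheses. Both class configurations $\{a,b^{-1}\},\{a^{-1},b\}$ and $\{a,a^{-1}\},\{b,b^{-1}\}$ are compatible with $g_1\not\sim g_2$, and in either of them your derivation of the ``oriented cyclic syllable pattern'' collapses precisely in the cases $f_i\sim f_i^{-1}$ and $f_i\sim f_j^{-1}$ --- which are the whole point of the proposition, since the hypothesis says nothing about inverses while the conclusion does. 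To repair this you need either a case analysis on which of $a\sim a^{-1}$, $b\sim b^{-1}$, $a\sim b^{-1}$ can hold simultaneously (for instance, $a\sim a^{-1}$ and $a\sim b^{-1}$ together would force $a\sim b$), combined with length mismatches between the $a$- and $b$-syllables to kill the surviving cross-type matchings; you should also make explicit that the unit syllables must already exceed the finite threshold implicit in the negation of $\sim$ (this is an additional reason, beyond the Schottky condition, for taking the power $N$ large). The final combinatorics on cyclic sequences, and the restriction to $i\ge 3$, are fine once the matching step is actually secured.
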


Replacing if necessary $g_1,g_2$ by high positive powers of conjugates, let $F$ be a free subgroup of $G$ with basis $\{g_1,g_2\}$ such that each nontrivial element of $F$ is hyperbolic and $F$ is quasi-convex with respect to the action on $X$. See the proof of \cite[Propostion 2]{BF} and \cite[Section 5.3]{Gr} for details. Such free groups are called \emph{Schottky groups}.

\begin{prop}\cite[Proposition 5]{BF}\label{Prop5: BF} Suppose $1\neq f \in F$ is cyclically reduced and $f\not \sim f^{-1}$. Then there is $a>0$ such that $h_{f^a}$ is unbounded on $<f>$. Moreover, if $f^{\pm 1}\not \sim f'\in F$ then $h_{f^a}$ is 0 on $<f'>$ for sufficiently large $a>0$. 
\end{prop}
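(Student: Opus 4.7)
The plan is to adapt the standard Bestvina--Fujiwara strategy, exploiting that copies of a sufficiently long fundamental segment $w$ of the quasi--axis of $f^a$ can be detected only along subsets of $X$ that fellow--travel the quasi--axis of $f$ with the correct orientation. Throughout, let $l_f$ denote a $(K,L)$--quasi--axis of $f$, let $B=B(\delta,K,L)$ be the Morse/stability constant from Lemma \ref{Lemma: stab}, and let $w=w_a$ be the segment of $l_{f^a}$ from $x_0$ to $f^a(x_0)$, so that $|w_a|$ grows linearly with $a$.

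First I would prove the unboundedness of $h_{f^a}$ on $\langle f\rangle$ by producing an explicit path. For each $n\geq 1$, concatenating the translates $w_a,\ f^a\cdot w_a,\dots,f^{(n-1)a}\cdot w_a$ gives a path $\alpha_n$ from $x_0$ to $f^{na}(x_0)$ containing $n$ non--overlapping copies of $w_a$, with $|\alpha_n|=n|w_a|$. By definition of $c_{w_a}$, this yields
\[
c_{w_a}(x_0,f^{na}(x_0))\ \geq\ d(x_0,f^{na}(x_0))-n|w_a|+Wn\ =\ Wn-O(1),
\]
since $d(x_0,f^{na}(x_0))$ differs from $n|w_a|$ by a constant independent of $n$ (the quasi--geodesic $\Gamma_{f,x_0,\gamma_0}$ is at bounded distance from $l_f$). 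Hence $c_{w_a}$ is unbounded on $\langle f\rangle$.

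Next, and this is the main obstacle, I need to bound $c_{w_a^{-1}}$ on $\langle f\rangle$ uniformly (for suitable large $a$). Suppose towards contradiction that for arbitrarily large $n$ there is a path $\beta$ from $x_0$ to $f^{na}(x_0)$ with $|\beta|_{w_a^{-1}}$ arbitrarily large. After deleting the loops at copies, $\beta$ is a $(K',L')$--quasi--geodesic with controlled constants, so by Lemma \ref{Lemma: stab} it lies in a $B'$--neighbourhood of $l_f$ for a universal $B'$. Consequently each copy $g\cdot w_a^{-1}$ inside $\beta$ has both endpoints $B'$--close to $l_f$, and hence $g\cdot w_a^{-1}$ fellow--travels a subsegment $J$ of $l_f$ of length comparable to $|w_a|$. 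Reversing orientation, $g\cdot w_a$ is a subsegment of $l_{f^{-1}}=l_f$ with reversed orientation whose image under $g^{-1}$ lies within $B$ of $l_{f^{-1}}$. Choosing $a$ large enough that $|w_a|$ exceeds every threshold needed in Definition \ref{def: sim}, this produces the orientation--matching required to conclude $f\sim f^{-1}$, contradicting the hypothesis. Therefore $c_{w_a^{-1}}(x_0,f^{na}(x_0))$ is bounded, and the unbounded growth of $c_{w_a}$ passes to $h_{f^a}=c_{w_a}-c_{w_a^{-1}}$.

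Finally, for the ``moreover'' part, let $f'\in F$ with $f^{\pm 1}\not\sim f'$. If $c_{w_a}(x_0,(f')^n(x_0))>0$, then some path $\beta$ from $x_0$ to $(f')^n(x_0)$ contains a copy $g\cdot w_a$. As before, $\beta$ fellow--travels $l_{f'}$, so $g\cdot w_a$ lies close to a subsegment of $l_{f'}$. Pulling back by $g^{-1}$ shows that arbitrarily long segments of $l_f$ can be mapped inside a bounded neighbourhood of $l_{f'}$; by checking orientations using the $g$--orientation discussion preceding Definition \ref{def: sim}, one obtains $f\sim f'$, contradiction. The identical argument applied to $w_a^{-1}$ yields $f^{-1}\sim f'$, again contradicting the hypothesis. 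Hence for $a$ sufficiently large, both $c_{w_a}$ and $c_{w_a^{-1}}$ vanish on the orbit $\langle f'\rangle x_0$, so $h_{f^a}\equiv 0$ on $\langle f'\rangle$. The hardest point, both technically and conceptually, is the orientation bookkeeping in Step 2, where one must convert the existence of many $w_a^{-1}$--subcopies inside a quasi--geodesic into a literal Definition \ref{def: sim}--equivalence between $f$ and $f^{-1}$.
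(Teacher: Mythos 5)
The paper does not prove this statement: it is quoted verbatim from Bestvina--Fujiwara \cite{BF} (their Proposition 5), so there is no internal proof to compare against. Your plan follows the standard Bestvina--Fujiwara strategy, and the overall architecture — lower bound for $c_{w_a}$ along the orbit path of $f^a$, upper bound for $c_{w_a^{-1}}$ extracted from $f\not\sim f^{-1}$, and the same mechanism for the vanishing on $\langle f'\rangle$ — is the right one.

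Two steps as written do not hold up. First, $d(x_0,f^{na}(x_0))$ does not in general differ from $n|w_a|$ by a constant independent of $n$: the defect is linear in $n$ (of order $n\,(|w_a|-a\tau(f))$, where $\tau$ denotes stable translation length), so the estimate only gives $c_{w_a}(x_0,f^{na}(x_0))\geq Wn-O(n)$, which is unbounded only if $W$ exceeds the per-period defect. This is precisely why $a$ must be large and $W$ a definite proportion of $|w_a|$; as stated, your inequality does not yet prove unboundedness. Second, and more seriously, a path $\beta$ nearly realizing the infimum in $c_{w_a^{-1}}$ is \emph{not} a quasi-geodesic, even after deleting loops: near-optimality only bounds its total length by a fixed multiple of $d(x_0,f^{na}(x_0))$ and gives no control whatsoever on subpaths, so Lemma \ref{Lemma: stab} cannot be applied to $\beta$. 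The actual engine of \cite{BF} is a separate lemma asserting that every copy of $w^{\pm1}$ counted in such an efficient path must lie in a uniformly bounded neighborhood of any geodesic joining the endpoints; it is proved by a shortcutting/surgery argument using $W\leq|w|/2$ and $\delta$-hyperbolicity, not by stability of quasi-geodesics. You need to state and prove (or at least correctly invoke) that lemma; once each copy is known to be close to the geodesic, and hence to the relevant quasi-axis, your orientation bookkeeping for deducing $f\sim f^{-1}$ or $f^{\pm1}\sim f'$ is essentially correct.
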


\begin{teorema}\label{Tma: bushy} Let $G$ be a group acting on a quasi-tree. If it is a Bestvina-Fujiwara action, then there is a bushy pseudocharacter $h\co G \to \br$. 
\end{teorema}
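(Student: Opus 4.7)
The plan is to construct the desired pseudocharacter as the Brooks-type limit of a sum of two quasi-homomorphisms $h_{f_1^a},h_{f_2^b}$ coming from the Bestvina-Fujiwara machinery, arranged so that each is unbounded on one cyclic subgroup and identically zero on the other. Bushyness then follows by feeding two hyperbolic elements with divergent quasi-axes into Proposition \ref{Prop: bornologous}.

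First I would invoke Proposition \ref{Prop2: BF} together with the Schottky construction described before Proposition \ref{Prop5: BF} to obtain hyperbolic elements $f_1,f_2\in F\leq G$ with $f_i\not\sim f_i^{-1}$ and $f_2\not\sim f_1^{\pm 1}$. Since the relation $\sim$ is invariant under simultaneous inversion of both arguments, this last condition also yields $f_1\not\sim f_2^{\pm 1}$. Two applications of Proposition \ref{Prop5: BF} then produce $a,b>0$ so that $h_{f_1^a}$ is unbounded on $\langle f_1\rangle$ and identically zero on $\langle f_2\rangle$, while $h_{f_2^b}$ is unbounded on $\langle f_2\rangle$ and identically zero on $\langle f_1\rangle$. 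Set $q=h_{f_1^a}+h_{f_2^b}$ and let $h(g)=\lim_{n\to\infty}q(g^n)/n$ be the associated pseudocharacter provided by Remark \ref{nota: pseudocharacter}. Thanks to the vanishing clauses, $h(f_1)=\lim h_{f_1^a}(f_1^n)/n\neq 0$ and $h(f_2)\neq 0$; replacing each $f_i$ by $f_i^{-1}$ if necessary (an operation which preserves the non-equivalence relations above and the quasi-axes as sets), we may assume $h(f_1),h(f_2)>0$.

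Next I would verify the hypotheses of Proposition \ref{Prop: bornologous}. Both summands $h_{f_i^{a_i}}$ are bornologous on the action by Proposition \ref{prop: bounded}, hence so is $q$; since $h-q$ is bounded, $h$ is bornologous too. From $f_1\not\sim f_2$ and Lemma \ref{lema: divergent} the quasi-axes $\Gamma_1(f_1,x_0,\gamma_1)$ and $\Gamma_2(f_2,x_0,\gamma_2)$ have infinite Hausdorff distance. Proposition \ref{Prop: bornologous} therefore gives $f_1^\infty\not\sim f_2^\infty$ in $E(h)$, so $|E(h)^+|\geq 2$. Running the same argument for $-h$ (still a bornologous pseudocharacter) and the pair $f_1^{-1},f_2^{-1}$, whose quasi-axes coincide setwise with those of $f_1,f_2$, yields $|E(h)^-|\geq 2$. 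Consequently $h$ is neither uniform nor unipotent, i.e.\ $h$ is bushy.

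The main obstacle is step one: a single Bestvina-Fujiwara quasi-character $h_{f^a}$ is tailored to one cyclic subgroup, so the nontrivial move is combining two of them additively. The vanishing clause of Proposition \ref{Prop5: BF} is what makes this combination harmless: it guarantees that the two pieces do not interfere on $\langle f_1\rangle$ or $\langle f_2\rangle$, so the Brooks-type limit is simultaneously nonzero on two independent hyperbolic elements with divergent quasi-axes — exactly the input Proposition \ref{Prop: bornologous} requires.
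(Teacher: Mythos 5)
Your proposal follows essentially the same route as the paper: the paper also takes the Bestvina--Fujiwara sequence from Proposition \ref{Prop2: BF}, forms the quasicharacter $h_1+h_2$ with the exponents chosen via Proposition \ref{Prop5: BF} so that each summand vanishes on the other cyclic subgroup, homogenizes it as in Remark \ref{nota: pseudocharacter}, and then applies Proposition \ref{Prop: bornologous} through Lemma \ref{lema: divergent} and Proposition \ref{prop: bounded} to separate $f_1^{\pm\infty}$ from $f_2^{\pm\infty}$ in $E(h)$. The only cosmetic differences are that the paper normalizes by choosing $\lim_k h_i(f_i^k)=+\infty$ rather than inverting the $f_i$, and disposes of the negative ends by saying ``the same argument'' rather than passing to $-h$; both are equivalent.
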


\begin{proof} Consider the sequence $f_1,f_1,...$ from \ref{Prop2: BF} and assume in addition (without loss of generality) that each $f_i$ is cyclically reduced. Define $h_i\co G\to \br$ as $h_i=h_{f^{a_i}}$ where $a_i$ is chosen as in Proposition \ref{Prop5: BF} so that $h_i$ is unbounded on $<f_i>$ and so that it is 0 on $<f_j>$ for $j<i$ and also 0 on $<f_{i+1}>$. With the same argument, we may also assume that $lim_{k\to \infty} h_i(f_i^k)=+\infty$.

Let $h$ be a pseudocharacter at a bounded distance (see Remark \ref{nota: pseudocharacter}) from the quasicharacter $h_1+h_2$. (Notice that everything works if we consider $h_{i}$ and $h_{i+1}$ instead). Clearly, $h(f_1)>0$ and $h(f_2)>0$. 

Therefore $\sigma_h(f_j)> 0$ and $f_j^{\pm \infty}$ defines an element 
in $E(h)$ for $j=1,2$. Let $w_j$ be the word representing $f_j$ in the letters $S\cup S^{-1}$. Then 
$w_j w_j ...=w_j^\infty$ is an element of $E(f)$ fixed by $f_j$ for $j=1,2$. Note that, with the assumptions taken, $\sigma(w_j^\infty)=+1$.

Let us see that $w_1^\infty\not \sim w_2^\infty$ in $E(h)$. It suffices to check that by Proposition \ref{prop: bounded} and Lemma \ref{lema: divergent} we are in the conditions of Proposition \ref{Prop: bornologous}.

The same argument proves that $w_1^{-\infty}\not \sim w_2^{-\infty}$ in $E(h)$.
\end{proof}

\begin{cor} If a Cayley graph $X=\Gamma(G,S)$ satisfies the bottleneck property and the canonical action of the group is a Bestvina-Fujiwara action, then there is a bushy pseudocharacter $h\co G \to \br$. 
\end{cor}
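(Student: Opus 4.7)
The plan is to observe that this corollary is essentially a direct application of Theorem \ref{Tma: bushy-introd}, since the hypotheses are arranged so that the Cayley graph $X=\Gamma(G,S)$ becomes a quasi-tree in the sense used throughout the paper. First I would note that, by definition, a quasi-tree (as fixed in Section 2) is just a graph satisfying the bottleneck property; thus the assumption that $X$ satisfies the bottleneck property is exactly the statement that $X$ is a quasi-tree. Equivalently, Theorem \ref{Bottleneck} gives a quasi-isometry between $X$ and a simplicial tree, which confirms that $X$ fits the hypotheses of Theorem \ref{Tma: bushy}.

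Next I would recall that the canonical left action of $G$ on its Cayley graph $\Gamma(G,S)$ is an action by isometries (this is standard). By hypothesis this action is in addition a Bestvina-Fujiwara action, so all the hypotheses of Theorem \ref{Tma: bushy} are met: we have a group acting on a quasi-tree by a Bestvina-Fujiwara action.

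Applying Theorem \ref{Tma: bushy} to $G \curvearrowright X$ then yields a bushy pseudocharacter $h\co G \to \br$, which is exactly the desired conclusion. There is no essential new obstacle here — the only subtlety is the terminological one of matching ``Cayley graph with bottleneck property'' to the paper's notion of quasi-tree, which is immediate from the definition recorded just before Proposition \ref{Prop: bornologous}.
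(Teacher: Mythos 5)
Your proposal is correct and matches the paper's (implicit) argument exactly: the paper states this corollary immediately after Theorem \ref{Tma: bushy} with no separate proof, precisely because a Cayley graph satisfying the bottleneck property is a quasi-tree in the sense fixed in Section 2, and the canonical isometric action being Bestvina--Fujiwara puts one directly in the hypotheses of that theorem. Nothing further is needed.
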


\section{Quasi-actions on trees}
 
Given a pseudocharacter $f\co G \to \br$, J. Manning introduces the following constructions. The firs one gives a tree obtained from the Cayley graph of the group.

Consider an (unambiguous) triangular genarting set $S$. Then scale $f$ so that $f(G)$ misses $\bz+\frac{1}{2}$ and so that $f$ changes by at most $\frac{1}{4}$ over each edge. Le $\tilde{K}$ be the simply connected 2-complex obtained by attaching 2-cells according to the relations of the presentation. Then, a tree is built with vertex set in one-to-one correspondence with the components of $\tilde{K}\backslash f^{-1}(\bz+\frac{1}{2})$. The edges correspond to components of $f^{-1}(\bz+\frac{1}{2})$, each of which is some possibly infinite track which separates $\tilde{K}$ into two components. This construction is also the starting point in \cite{M-P} where given a real valued function on a geodesic space we give a sufficent condition for the space to be quasi-isometric to a tree. 

The next appears as Definition 4.9 in \cite{Man}.

Let $V$ be the set of components of $f^{-1}(\bz +\frac{1}{2})$. Then $V$ is in one-to-one correspondence with the set of edges of $T$. Let $X$ be the simplicial graph with vertex set equal to $G\times V$ and the following edge condition: Two distinct vertices $(g,\tau)$ and $(g',\tau')$ are to be connected by an edge if there is some $h$ so that $hg(\tau)$ and $hg'(\tau')$ are contained in the same connected component of $f^{-1}[n-\frac{3}{2},n+\frac{1}{2}]$ for some $n$. The zero-skeleton $X^0$ is endowed with a $G$-action by setting $g(g_0,\tau_0)=(gg_0,\tau_0)$. Since this action repects the edge conditin on pairs of vertices, it extends to an action on $X$. We will refer to this particular one as \textbf{Manning's action}.

\begin{prop} \cite[Proposition 4.27]{Man} \label{Prop: 4.27} If $f\co G \to \br$ is a bushy pseudocharacter, then Manning's action is a Bestvina-Fujiwara action.
\end{prop}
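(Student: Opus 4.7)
The plan is to verify the three defining features of a Bestvina--Fujiwara action for the $G$-action on Manning's quasi-tree $X$: that $X$ is a hyperbolic graph (immediate, since $X$ is a quasi-tree by construction), that the action is nonelementary, and that there exist independent hyperbolic $g_1,g_2\in G$ with $g_1\not\sim g_2$ in the sense of Definition \ref{def: sim}. The latter two conditions will both be extracted from the bushiness of $f$, with elements of $E(f)$ serving as the bridge between the combinatorial structure of $\Gamma(G,S)$ and the geometric structure of $X$.

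Bushiness gives $|E(f)^+|\geq 2$ and $|E(f)^-|\geq 2$. I would first pick two distinct classes in $E(f)^+$ and realise each by a periodic word $v_i^\infty$, where $v_i$ represents some $g_i\in G$ with $f(g_i)>0$ (keeping a third such element in reserve should the negative endpoints of $g_1,g_2$ happen to coincide). Because edges of $X$ connect pairs of vertices whose tracks lie, after a single group translation, in a common $f$-slab of width $2$, and because $f$ is a pseudocharacter, the $f$-value on vertices of $X$ changes by only a bounded amount across each edge. Combined with $f(g_i^n x_0)=nf(g_i)\to+\infty$ linearly in $n$, this forces $\{g_i^n x_0\}$ to be a quasi-geodesic in $X$, so each $g_i$ is hyperbolic. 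The two endpoints of the $g_i$-quasi-axis in $\partial X$ are then represented by $v_i^{\pm\infty}\in E(f)$, so inequivalence in $E(f)$ yields distinct endpoints; after adjusting by the reserve element if necessary, I obtain independence of $g_1,g_2$ and hence nonelementarity.

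For $g_1\not\sim g_2$ in the Bestvina--Fujiwara sense I would argue by contradiction: if $g_1\sim g_2$, then arbitrarily long subsegments $J$ of a $g_1$-quasi-axis admit a group translate $gJ$ lying in a uniform neighborhood of a $g_2$-quasi-axis with matching orientation. Reading the endpoints of $J$ and $gJ$ as group elements, tracking their $f$-values, and using that $f$ varies boundedly across $X$-edges, the element $g$ furnishes (as $|J|\to\infty$) a connecting word from a prefix of $v_1^\infty$ to a prefix of $v_2^\infty$ satisfying the height bound of $\sim_C$ for some $C$; this contradicts the distinctness of the chosen $E(f)^+$-classes. The main obstacle is precisely this last translation: Definition \ref{def: sim} is an orbit-level statement about quasi-axes in $X$, whereas $\sim$ on $E(f)$ is a word-level statement in $\Gamma(G,S)$, and bridging the two requires careful use of the edge definition of $X$ to pull long $X$-quasi-geodesic segments back, up to bounded $f$-error, to admissible prefixes and connecting words. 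In effect this runs the bornologous argument of Proposition \ref{Prop: bornologous} in reverse.
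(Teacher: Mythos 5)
The paper itself offers no proof of this statement---it is imported directly from \cite[Proposition 4.27]{Man}---so your reconstruction has to stand on its own. Its first two stages are broadly reasonable in outline: elements $g$ with $f(g)>0$ are hyperbolic on $X$ because the height function is coarsely Lipschitz on $X$ while $f(g^n)=nf(g)$ grows linearly, and the injection of $E(f)$ into $\partial X$ converts distinctness of classes into distinctness of fixed points. Even here, though, you assume without justification that two distinct classes of $E(f)^+$ can be represented by \emph{periodic} words $v_i^\infty$, and the ``reserve element'' you invoke to separate coinciding negative endpoints is not obviously supplied by bushiness, which only guarantees $|E(f)^{\pm}|\geq 2$.

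The fatal gap is the last step. The implication you rely on---that $g_1\sim g_2$ in the sense of Definition~\ref{def: sim} forces $v_1^\infty\sim v_2^\infty$ in $E(f)$---is false, and the obstacle you yourself flag (``this last translation'') cannot be bridged. Take $G=F(a,b)$ free and $f$ the homomorphism with $f(a)=f(b)=1$, which is bushy; let $g_1=a$ and $g_2=bab^{-1}$. The classes $a^\infty$ and $(bab^{-1})^\infty$ are distinct in $E(f)^+$ (any path joining $a^n$ to $ba^m$ in the tree must pass through the identity, where $f=0$, so no connecting word at large height $D$ exists), yet $g_1\sim g_2$ in the Bestvina--Fujiwara sense: $b$ carries a quasi-axis of $a$ onto a quasi-axis of $bab^{-1}$ preserving orientation, as happens for \emph{any} pair of conjugate hyperbolic elements in \emph{any} isometric action, in particular in Manning's action. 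The root of the problem is that Definition~\ref{def: sim} permits an arbitrary group translation $g$, so the translated point $g\,g_1^n(x_0)$ need not be near any prefix of $v_1^\infty$, and the word taking $g_1^n$ to $g\,g_1^n$ has completely uncontrolled $f$-values along its prefixes; no connecting word in the sense of $\sim_C$ is produced. Hence a pair with distinct $E(f)^+$-classes need not be BF-inequivalent, and the proposition cannot be proved by picking an arbitrary such pair: one must either choose $g_1,g_2$ far more carefully or produce a BF-inequivalent pair by a different mechanism, which is what Manning's actual argument does.
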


\begin{teorema}\cite[Theorem 4.15]{Man} The space $X$ satisfies the Bottlenek Property.
\end{teorema}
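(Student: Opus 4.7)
The plan is to establish that $X$ is quasi-isometric to the simplicial tree $T$ constructed at the beginning of this section, and then invoke Theorem \ref{Bottleneck} to deduce the Bottleneck Property for free. Since the edges of $T$ are in bijection with $V$, there is a natural candidate projection $\pi\co X\to T$: send each vertex $(g,\tau)\in G\times V$ to (a point of) the edge of $T$ determined by the track $g(\tau)\subset \tilde K$, and extend affinely over the edges of $X$.

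First I would verify that $\pi$ is coarsely Lipschitz on vertices. If $(g,\tau)$ and $(g',\tau')$ span an edge of $X$, then by definition some $h\in G$ places both translates $hg(\tau)$ and $hg'(\tau')$ inside a single connected component $U$ of the slab $f^{-1}[n-\frac{3}{2},n+\frac{1}{2}]$. The $f$-range of $U$ has length $2$, so $U$ meets at most a bounded number of half-integer level sets and therefore corresponds to a subtree of $T$ of diameter at most some universal constant $C_0$. Hence $d_T(\pi((g,\tau)),\pi((g',\tau')))\le C_0$ and $\pi$ is $C_0$-Lipschitz.

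Next I would build a coarse inverse to $\pi$. Given two elements $\tau_0,\tau_d\in V$ at tree-distance $d$, enumerate a geodesic $\tau_0,\tau_1,\dots,\tau_d$ of consecutive edges of $T$. Each pair $\tau_i,\tau_{i+1}$ shares a vertex of $T$, so the two tracks lie on the boundary of a common component of $\tilde K\setminus f^{-1}(\bz+\frac{1}{2})$ and in particular fit inside a common slab; hence $(1,\tau_i)$ and $(1,\tau_{i+1})$ are joined by an edge in $X$, giving $d_X((1,\tau_0),(1,\tau_d))\le d$. Thus the section $\tau\mapsto (1,\tau)$ already quasi-isometrically embeds $T$ into $X$, and combining with Step~1 will show $\pi$ is a quasi-isometry.

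The main obstacle is the last missing ingredient: controlling the $X$-diameter of the fibers of $\pi$. Two vertices $(g,\tau)$ and $(g',\tau')$ may record the same edge of $T$ while differing wildly in the $G$-coordinate, and the argument above only covers the base copy $\{1\}\times V$. Bounding the distance between such representatives requires exploiting that the pseudocharacter $f$ is a quasi-invariant of the action up to bounded coboundary, so that $g$ shifts any track by a controlled amount of slabs; this is the technical heart of Manning's construction and is precisely where the definition of the edges of $X$ via the auxiliary element $h$ is designed to absorb the $G$-twist. Once this diameter bound is in place, $\pi$ becomes a genuine quasi-isometry and Theorem~\ref{Bottleneck} finishes the proof.
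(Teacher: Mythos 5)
This theorem is quoted from \cite[Theorem 4.15]{Man} and the present paper gives no proof of it, so your sketch can only be measured against Manning's argument and its own internal completeness. On the latter count it falls short: you yourself flag that the bound on the $X$-diameter of the fibers of $\pi$ is ``the technical heart'' and leave it unproved. That step is not a finishing touch --- together with coarse surjectivity and the lower distance bound it \emph{is} the assertion that $\pi$ is a quasi-isometry, which in turn is the entire content of the theorem once Theorem \ref{Bottleneck} is invoked. A sketch that defers exactly this point has not yet engaged with the difficulty.

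There are also two concrete problems earlier in the argument. First, the map $\pi$ is not well defined as stated: since $f$ is only a pseudocharacter, $f(g\cdot x)$ and $f(x)+f(g)$ differ by a bounded but nonzero error, so the translate $g(\tau)$ of a component $\tau$ of $f^{-1}(\bz+\frac{1}{2})$ is in general \emph{not} a component of $f^{-1}(\bz+\frac{1}{2})$, hence does not determine an edge of $T$. This failure of $G$ to act on the set of tracks is precisely why Manning introduces the auxiliary graph $X$ (on which $G$ honestly acts) instead of working with $T$ directly, and why the conclusion of \cite[Theorem 4.20]{Man} is only a quasi-action on a tree; one can repair $\pi$ by choosing a nearby track, but that choice and its bounded ambiguity must be controlled. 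Second, your Lipschitz estimate rests on the claim that a connected component $U$ of $f^{-1}[n-\frac{3}{2},n+\frac{1}{2}]$ corresponds to a subtree of $T$ of uniformly bounded diameter. Meeting only three half-integer levels bounds the \emph{height} of $U$, not the number of tracks it contains nor their pairwise distances in $T$; a single slab component can carry many tracks, and the assertion that they span a uniformly bounded subtree is a nontrivial statement about the combinatorics of the tracks that needs its own proof. Manning's cited proof avoids this route altogether: it verifies the Bottleneck Property for $X$ directly, locating the required midpoint via the separation properties of the level sets of $f$ rather than by first exhibiting a quasi-isometry $X\to T$.
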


Therefore, from Theorem \ref{Tma: bushy}, we can give the following corollary which would be some kind of converse to Proposition \ref{Prop: 4.27}.

\begin{cor} If Manning's action is a Bestvina-Fujiwara action, then there is a bushy pseudocharacter $h\co G \to \br$.
\end{cor}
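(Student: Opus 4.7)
The plan is to observe that this corollary is essentially a direct concatenation of Theorem \ref{Tma: bushy} with the cited Theorem 4.15 of Manning, so the entire content of the argument is checking that the hypotheses of Theorem \ref{Tma: bushy} are satisfied by Manning's action.

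First I would recall that by construction the space $X$ underlying Manning's action is a simplicial graph with vertex set $G\times V$ and the stated edge condition. This fits the running convention of the paper that a quasi-tree is a graph satisfying the Bottleneck Property. Next I would invoke the cited Theorem 4.15 of \cite{Man} (reproduced just above as the theorem stating that $X$ satisfies the Bottleneck Property) to conclude that $X$ is indeed a quasi-tree in the sense of this paper.

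Having a quasi-tree in hand, I would then apply Theorem \ref{Tma: bushy} to the action: by hypothesis Manning's action is a Bestvina-Fujiwara action, so Theorem \ref{Tma: bushy} yields a bushy pseudocharacter $h\co G\to\br$, as required.

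Since the argument is this short, I do not foresee a genuine obstacle; the only point one has to be a little careful about is confirming that ``Bestvina-Fujiwara action on a quasi-tree'' in the sense used to state Theorem \ref{Tma: bushy} coincides with what is being assumed about Manning's action here, which follows immediately from the fact that $X$ is a simplicial graph plus the Bottleneck Property from \cite[Theorem 4.15]{Man}.
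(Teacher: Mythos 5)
Your proposal is correct and follows exactly the route the paper intends: the paper gives no separate proof, only the remark that the corollary follows from Theorem \ref{Tma: bushy} once one notes, via \cite[Theorem 4.15]{Man}, that the graph $X$ underlying Manning's action satisfies the Bottleneck Property and hence is a quasi-tree in the paper's sense. Your additional check that the space is a simplicial graph (so the paper's convention for quasi-trees applies) is the right point to be careful about, and it holds by construction.
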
 

\begin{lema}\cite[4.17]{Man} There is an injective map from $E(f)$ to $\partial X$
\end{lema}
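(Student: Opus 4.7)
The plan is to construct an explicit injection $\Phi \co E(f) \to \partial X$ by sending an infinite word $w = w_1 w_2 \cdots$ to the limit in $\partial X$ of the sequence of vertices in $X$ traced out by the path $\phi_w$ in the Cayley graph. First I would fix a base track $\tau_0 \in V$ and, for each $w \in E(f)$ with $\sigma(w) = +1$ (the case $\sigma(w) = -1$ is dual), consider the sequence $x_n = (w_n, \tau_0) \in G \times V = X^0$, where $w_n$ is the length-$n$ prefix of $w$. Consecutive vertices $x_n$ and $x_{n+1}$ differ by a single generator, so Manning's edge relation (taking $h = w_n^{-1}$ places both $\tau_0$ and $w_{n+1}^{-1}\tau_0$ inside a common band $f^{-1}[m-\tfrac{3}{2}, m+\tfrac{1}{2}]$ for some $m$) keeps them at uniformly bounded $X$-distance. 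Because $f \circ \phi_w \to +\infty$ and this height function lifts to a coarsely monotone function on $X$, the sequence $\{x_n\}$ is an unbounded quasi-geodesic ray in $X$, which defines a point $\Phi([w]) \in \partial X$.

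Next I would verify well-definedness: if $w \sim_C v$, then given any large $D$ the connecting word $d$ with $w_p d = v_p$ and $|f(w_p d') - D| \le C$ for every prefix $d'$ yields a path in $X$ from $x^w_p = (w_p, \tau_0)$ to $x^v_p = (v_p, \tau_0)$ visiting only tracks of $f$-level within $O(C)$ of $D$. Using $G$-equivariance together with the fact that only finitely many track orbits lie in such a narrow $f$-band (once the generating set is fixed), this path has $X$-length bounded purely in terms of $C$. Letting $D \to \infty$ forces the Gromov product $(\Phi([w]) \mid \Phi([v]))_{x_0}$ in $X$ to diverge, so both rays converge to the same boundary point and $\Phi$ descends to a map on $E(f)$.

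For injectivity, suppose $\Phi([w]) = \Phi([v])$. Since $X$ is a quasi-tree, Theorem \ref{Bottleneck} forces the two rays to pass arbitrarily close to one another at arbitrarily large heights. Given any $C > 0$ and any target value $D > C$, I would pick prefixes $w_p$, $v_p$ with $x^w_p$ and $x^v_p$ at uniformly bounded $X$-distance, unwind the resulting chain of $X$-edges through Manning's edge relation into group elements, and assemble them into a word $d$ with $w_p d = v_p$ in $G$; the band condition on $X$-edges together with the Lipschitz constant of $f$ on the Cayley graph yields $|f(w_p d') - D| \le C$ for every prefix $d'$, so $w \sim_C v$, contradicting distinctness in $E(f)$.

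The main obstacle is this last translation step in the injectivity argument: given only a short $X$-path between $x^w_p$ and $x^v_p$, one must produce a genuine connecting word in $G$ whose intermediate partial products stay inside a prescribed $f$-window. This requires carefully balancing two sources of slack — the width of the $f$-bands used to define edges in $X$ and the Lipschitz constant of $f$ on the Cayley graph — so that after choosing $D$ much larger than all these constants the bound $|f(w_p d') - D| \le C$ really holds and is not contaminated by the constants hidden in the quasi-tree approximation.
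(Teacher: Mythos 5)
This lemma is quoted verbatim from Manning's paper and the present text supplies no proof of its own, so there is nothing internal to compare against; your proposal has to be judged against the construction of $X$ given in Section 4. Its overall architecture is the right one (and is essentially Manning's): positive/negative elements of $E(f)$ determine ends of the quasi-tree because the height function $f$ is coarsely Lipschitz and coarsely monotone along the relevant paths, and the bottleneck property converts ``the connecting path stays at height $\approx D$'' into ``the Gromov product at the basepoint is $\gtrsim D$.'' The definition of $\Phi$ and the injectivity step are sketched along defensible lines, although the injectivity step leans on an unstated but essential fact: each component of $f^{-1}[n-\frac{3}{2},n+\frac{1}{2}]$ meets the $1$-skeleton in a connected set (this is exactly what the triangular presentation and the scaling of $f$ by $\frac14$ per edge are for), and this is what lets you unwind a single $X$-edge into a Cayley-graph path whose partial products stay in the prescribed $f$-window.

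The one step that is actually wrong as stated is in the well-definedness argument: the claim that the path in $X$ induced by a connecting word $d$ has ``$X$-length bounded purely in terms of $C$'' because ``only finitely many track orbits lie in such a narrow $f$-band.'' The group $G$ does not act on the set of tracks ($g(\tau)$ is a level set of $f\circ g^{-1}$, not of $f$ -- this is precisely why Manning only obtains a quasi-action on $T$), so ``track orbits'' is not a meaningful count; and the connecting word $d$ may be arbitrarily long and cross arbitrarily many tracks at level $\approx D$, so no bound on the $X$-length in terms of $C$ alone should be expected. Fortunately you do not need it: it suffices that the induced path stays at height within $O(C)$ of $D$, whence by the (iterated) bottleneck property of Theorem \ref{Bottleneck} the geodesic $[x^w_p,x^v_p]$ lies at height $\gtrsim D$, so $(x^w_p\mid x^v_p)_{x_0}\to\infty$ as $D\to\infty$ and the two sequences are equivalent at infinity. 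Replace the length bound by this height argument (which you already use, in the correct form, in the injectivity step) and the sketch is sound; note also that the sequence $x_n=(w_n,\tau_0)$ need not be a quasi-geodesic -- $f(w_n)$ may grow sublinearly -- but convergence to infinity in the Gromov sense is all that is required to define $\Phi([w])\in\partial X$.
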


\begin{teorema}\cite[4.20]{Man} If $f\co G \to \br$ is a pseudocharacter which is not uniform, then $G$ admits a cobounded quasi-action on a bushy tree.
\end{teorema}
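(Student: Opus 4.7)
The approach is to use Manning's auxiliary graph $X$ (from Definition~4.9, recalled just before Proposition~\ref{Prop: 4.27}) as an intermediary: $X$ carries a \emph{genuine} isometric $G$-action and is quasi-isometric to a tree, so the isometric action conjugates into a quasi-action on that tree. The non-uniformity of $f$ is used at the end to guarantee enough ends, and coboundedness of the $G$-action on $X$ propagates this to bushiness of the tree.

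\textbf{Step 1 (the graph and its isometric action).} Rescale $f$ as in the first construction, build the vertex set $V = \pi_0(f^{-1}(\bz+\tfrac{1}{2}))$ in the Cayley $2$-complex $\tilde K$, and form $X$ with vertex set $G\times V$ and the edge condition of Definition~4.9 of \cite{Man}. The formula $g\cdot(g_0,\tau_0)=(gg_0,\tau_0)$ gives an isometric $G$-action on $X$.

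\textbf{Step 2 (pass to a tree via Bottleneck).} Apply Theorem~4.15 of \cite{Man} (quoted above) to conclude that $X$ satisfies the Bottleneck Property, and then Theorem~\ref{Bottleneck} to produce a quasi-isometry $\Phi\co X\to T$ to a simplicial tree $T$, with a quasi-inverse $\Psi\co T\to X$. Define the quasi-action by $A_g := \Phi\circ g\circ\Psi$. Because each $g$ acts isometrically on $X$ and $\Psi\circ\Phi$ is uniformly close to $\mathrm{id}_X$, a routine computation shows $\{A_g\}_{g\in G}$ has uniform quasi-isometry constants and $A_h\circ A_g$ is uniformly close to $A_{hg}$, i.e.\ $A$ is a quasi-action on $T$.

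\textbf{Step 3 (coboundedness).} The $G$-action on $X$ has only finitely many vertex orbits: the first coordinate of $G\times V$ is permuted simply transitively, and $G$ has only finitely many orbits on $V$ because it acts cocompactly on $\tilde K$. Hence the orbit of any single vertex of $X$ is coarsely dense in $X$, and pushing forward by $\Phi$ yields coboundedness of the quasi-action on $T$.

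\textbf{Step 4 (bushiness).} By Lemma 4.17 of \cite{Man} there is an injection $E(f)\hookrightarrow\partial X$, and via $\Phi$ this produces distinct ends of $T$. Non-uniformity of $f$ (together with the fact that any nontrivial pseudocharacter has both positive and negative rays, so $|E(f)^{\pm}|\ge 1$) forces $|E(f)|\ge 3$, so $\partial T$ has at least three points. Combine this with coboundedness: the quasi-action permutes these three ends up to bounded error, and a cobounded quasi-action on a tree with three distinct ends must have tripods densely distributed, since any point of $T$ can be translated close to a branch point witnessing the three ends. This is precisely the bushy condition, so $T$ is a bushy tree carrying a cobounded quasi-action of $G$.

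The main obstacle is Step~4: extracting bushiness (every point near a branch point) from merely \emph{three ends} is not automatic, since a tripod has three ends but is not bushy. Coboundedness of the $G$-quasi-action is doing the real work here, translating the branch point of a tripod close to every point of $T$. Making this precise requires a careful argument of the Mosher--Sageev--Whyte type, verifying that the three ends coming from $E(f)$ are genuinely moved apart (not all fixed or collapsed) by the quasi-action, and that the images of a fixed tripod under $G$ form a net in $T$.
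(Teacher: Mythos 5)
The paper does not prove this statement: it is quoted verbatim from Manning \cite[Theorem 4.20]{Man}, so there is no internal proof to compare against. Your outline reproduces the strategy of Manning's original argument (build the auxiliary graph $X$ of Definition 4.9, verify the Bottleneck Property, conjugate the genuine isometric action through the quasi-isometry to a tree, and extract bushiness from $|E(f)|\ge 3$ plus coboundedness), so at the level of architecture it is the right proof.

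Two points need repair. First, your justification of coboundedness in Step 3 is wrong as stated: $G$ does not act on $V=\pi_0(f^{-1}(\bz+\frac{1}{2}))$ at all, because $f$ is not equivariant --- $f(gx)$ differs from $f(x)$ by roughly $f(g)$, which need not be an integer, so $g$ does not carry tracks to tracks. This is precisely why Manning takes the vertex set of $X$ to be $G\times V$ with $G$ acting only on the first factor. Coboundedness of that action is true, but it requires the argument that every vertex $(g',\tau')$ is uniformly close in $X$ to the fixed orbit $G\times\{\tau_0\}$, using that every track passes near some group element and that $f$ varies boundedly along edges; ``finitely many orbits on $V$'' is not available. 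Second, Step 4, which you rightly flag as the main obstacle, does go through but needs the following completion: fix a branch point $p$ of $T$ witnessing three distinct ends (these exist since a nontrivial $f$ gives $|E(f)^{\pm}|\ge 1$, hence non-uniformity gives $|E(f)|\ge 3$, and Lemma 4.17 injects $E(f)$ into the boundary); for $x\in T$ choose $g$ with $A_g(p)$ near $x$; since the $A_g$ are quasi-isometries with uniform constants they coarsely preserve Gromov products, so the pairwise Gromov products of the three image ends based at $A_g(p)$ are uniformly bounded, and in a tree the median of three distinct ends lies at distance exactly $\max_{i\ne j}(\xi_i|\xi_j)_{x}$ from the basepoint $x$; hence a genuine branch point separating three unbounded components lies within uniform distance of every point of $T$. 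Finally, you should exclude (or treat separately) the trivial pseudocharacter, which is ``not uniform'' under the definitions as literally stated but for which the conclusion fails.
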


Then, it is readily seen, from the construction of the action and the bushy tree, that Corollary \ref{Cor: metrically proper} yields the following.

\begin{cor} Consider a nonelementary action of a group $G$ on a quasi-tree $X$. If the action is metrically proper then for any pseudocharacter $h\co G \to \br$ and any pair of independent $g_1,g_2\in G$ such that $h(g_1)> 0$ and $h(g_2)> 0$ there is a cobounded quasi-action of $G$ on a bushy tree $T$ such that there is an injective map from $E(h)$ to $\partial T$.
\end{cor}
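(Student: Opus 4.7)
The plan is to combine the bushiness forced by Corollary \ref{Cor: metrically proper} with Manning's tree construction recalled in Section~4, then transport Manning's boundary embedding through the quasi-isometry to a tree. First I would check that $h$ is non-uniform. Since $g_1, g_2$ are independent hyperbolic elements, by the very definition of independence combined with stability of quasi-geodesics in the hyperbolic space $X$ (Lemma \ref{Lemma: stab}), we have $d_H(\Gamma_1(g_1,x_0,\gamma_1), \Gamma_2(g_2,x_0,\gamma_2)) = \infty$. Together with $h(g_1) > 0$, $h(g_2) > 0$ and metrical properness of the action, Corollary \ref{Cor: metrically proper} gives $g_1^\infty \not\sim g_2^\infty$ in $E(h)$, and applying the same argument to $g_1^{-1}, g_2^{-1}$ yields $g_1^{-\infty} \not\sim g_2^{-\infty}$. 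Hence $|E(h)^+| \geq 2$ and $|E(h)^-| \geq 2$, so $h$ is bushy and in particular non-uniform.

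Next, Manning's Theorem \cite[4.20]{Man} applied to the non-uniform pseudocharacter $h$ produces a cobounded quasi-action of $G$ on a bushy tree $T$. Unwinding the construction recalled in Section~4 of the present paper, one first rescales $h$ (which does not alter $E(h)$, since $E$ is invariant under positive scaling), builds Manning's simplicial graph $X_M$ with vertex set $G \times V$, and obtains an isometric $G$-action on $X_M$. By Theorem \cite[4.15]{Man} the graph $X_M$ satisfies the Bottleneck Property, hence by Theorem \ref{Bottleneck} is quasi-isometric to a simplicial tree $T$ via some $q \co X_M \to T$; conjugating the isometric $G$-action on $X_M$ by $q$ gives the asserted cobounded quasi-action on $T$.

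Finally, Lemma \cite[4.17]{Man} provides an injection $\iota \co E(h) \hookrightarrow \partial X_M$. Since $X_M$ is a quasi-tree, hence $\delta$-hyperbolic, and $q$ is a quasi-isometry between hyperbolic spaces, $q$ induces a homeomorphism $\partial q \co \partial X_M \to \partial T$ by the standard stability-of-quasi-geodesics argument (Lemma \ref{Lemma: stab}). The composition $\partial q \circ \iota$ is the sought injective map $E(h) \hookrightarrow \partial T$. The only step I expect to require any care is checking that Manning's normalization of $h$ and the quasi-isometry $q$ preserve the boundary embedding; both reduce to the invariance of $E$ under positive scaling and to the functoriality of $\partial$ for quasi-isometries between hyperbolic spaces, so no serious obstacle is anticipated.
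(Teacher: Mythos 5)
Your argument is correct and follows exactly the route the paper intends (the paper itself only remarks that the result is ``readily seen'' from Corollary \ref{Cor: metrically proper} together with Manning's construction): independence plus metrical properness force $g_1^\infty\not\sim g_2^\infty$, hence $h$ is non-uniform, Manning's Theorem 4.20 then yields the cobounded quasi-action on a bushy tree, and Lemma 4.17 composed with the boundary homeomorphism induced by the quasi-isometry from Manning's graph to the tree gives the injection $E(h)\hookrightarrow\partial T$. The only cosmetic point is that for $g_1^{-\infty}\not\sim g_2^{-\infty}$ one should apply the argument to $-h$ (since $h(g_i^{-1})<0$), but that step is not even needed, as $|E(h)^+|\geq 2$ already gives non-uniformity.
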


\section{Space of pseudocharacters}

Given a group $G$, quasi-characters and pseudocharacters are major tools in the study of the bounded cohomology group $H_b^2(G;\br)$ as we can see in \cite{BF}.

The bounded cohomology group $H_b^*(G;\br)$ of a discrete group $G$ is defined by the cochain complex $C_b^k(G;\br)$ where
$$C_b^k(G;\br)=\{f\co G^k \to \br \, | \, sup_{G^k}|f(g_1,...,g_k)|<\infty \},$$
and the boundary $\delta\co C_b^k(G;A)\to C_b^{k+1}(G;A)$ is given by

$$\delta f(g_0,...,g_k)=f(g_1,...,g_k)+\sum_{i=1}^k (-1)^if(g_0,...,g_{i-1}g_i,...,g_k) + (-1)^{k+1}f(g_0,...,g_{k-1}).$$

See \cite{Gr82,Iva} as general references for bounded cohomology.

\begin{nota} Note that 1-cocycles are just group homomorphisms $f\co G \to \br$. In fact, $HOM(G)=H^1(G;\br)$. A \emph{quasicharacter} is an element $f\in C^1(G;\br)$ whose coboundary $\delta f$ lies in $C_b^2(G;\br)$. A \emph{pseudocharacter} is a quasicharacter such that $f(g^k)=kf(g)\; \forall \, k\in \bz$ and $g\in G$. 
\end{nota}

Let $\mathcal{V}(G)$ be the vector space of all quasi-homomorphisms $G\to \br$, $BDD(G)$ the subspace of all bounded functions.
Then, let $QH(G)=\mathcal{V}(G)/BDD(G)$.

There is an exact sequence
$$0\to H^1(G;\br)\to QH(G)\to H_b^2(G;\br)\to H^2(G;\br)$$

Using the sequence $f_1,f_2,...$ obtained in Proposition \ref{Prop2: BF}, Bestvina and Fujiwara prove that $[h_i]\in QH(G)$ is not a linear combination of $[h_1],...,[h_{i-1}]$, i.e., the sequence $[h_i]$ consists of linearly independent elements (see the proof of \cite[Theorem 1]{BF}). This implies that the dimenion of $H_b^2(G;\br)$ as a vector space over $\br$ is the cardinal of the continuum. See \cite[Corollary 1.3]{F1} and \cite[Theorem 1]{BF}.

Therefore, since the argument in Theorem \ref{Tma: bushy} works also for any pair $f_i,f_{i+1}$, the following is immediate.

\begin{cor} Let $G$ be a group acting on a (geodesic) Gromov hyperbolic graph $X$ satisfying the bottleneck property. If it is a Bestvina-Fujiwara action, then the dimension of the subspace generated by the bushy pseudocharacters as a vector space over $\br$ is the cardinal of the continuum. 
\end{cor}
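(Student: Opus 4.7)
The plan is to combine Theorem \ref{Tma: bushy} applied to \emph{arbitrary} pairs from the Bestvina--Fujiwara sequence with the Bestvina--Fujiwara computation of $\dim H_b^2(G;\br)$ recalled above. The one genuinely new ingredient is a trivial algebraic identity that lets the paired bushy pseudocharacters recover each individual $\phi_i$ in their linear span.

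First, I would apply Proposition \ref{Prop2: BF} to the given Bestvina--Fujiwara action to obtain the sequence $f_1, f_2, \ldots$ of hyperbolic elements, and for each $i$ choose $a_i$ as in Proposition \ref{Prop5: BF} so that $h_i := h_{f_i^{a_i}}$ is unbounded on $\langle f_i \rangle$ and vanishes on $\langle f_j \rangle$ for every $j \neq i$ (the latter because $f_i \not\sim f_j^{\pm 1}$, symmetrically in $i$ and $j$). Let $\phi_i$ denote the unique pseudocharacter at bounded distance from $h_i$ produced by Remark \ref{nota: pseudocharacter}. By the Bestvina--Fujiwara argument cited immediately before the corollary, the classes $[\phi_i]$ span a subspace of $QH(G)/BDD(G)$ whose dimension over $\br$ is the cardinal of the continuum.

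Next, for each pair of indices $i < j$, I would rerun the proof of Theorem \ref{Tma: bushy} with $(f_1,f_2)$ replaced by $(f_i,f_j)$. The independence of $f_i, f_j$ and the condition $f_i \not\sim f_j^{\pm 1}$ feed into Lemma \ref{lema: divergent}, Proposition \ref{prop: bounded} supplies the bornologous property for $h_i + h_j$, and Proposition \ref{Prop: bornologous} then delivers a bushy pseudocharacter at bounded distance from $h_i + h_j$. Since bounded pseudocharacters vanish, this pseudocharacter must equal $\phi_i + \phi_j$; call it $\psi_{i,j}$.

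To conclude, I would observe that for any three distinct indices $i, j, k$ the identity
\[2\phi_i = \psi_{i,j} + \psi_{i,k} - \psi_{j,k}\]
exhibits each $\phi_i$ in the linear span of $\{\psi_{i,j}\}_{i<j}$. Hence the subspace of pseudocharacters generated by the bushy pseudocharacters $\psi_{i,j}$ contains the full span of $\{\phi_i\}_{i\geq 1}$, and since the vector space of pseudocharacters embeds into $QH(G)/BDD(G)$ (bounded pseudocharacters being zero), its dimension is the cardinal of the continuum. The only conceptual point to check is that the proof of Theorem \ref{Tma: bushy} really is uniform in the choice of pair from the sequence of Proposition \ref{Prop2: BF}; this is transparent from its proof, as the author himself flags with the parenthetical remark that ``everything works if we consider $h_i$ and $h_{i+1}$ instead''. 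No deeper obstacle remains.
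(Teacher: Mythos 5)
Your overall strategy coincides with the paper's: the paper's entire proof of this corollary is the sentence preceding it, namely that the argument of Theorem \ref{Tma: bushy} applies verbatim to any consecutive pair $f_i,f_{i+1}$ from the sequence of Proposition \ref{Prop2: BF}, so that the linearly independent classes $[h_i+h_{i+1}]$ are represented by bushy pseudocharacters and the Bestvina--Fujiwara dimension count does the rest. Your added refinement --- recovering each $\phi_i$ from the identity $2\phi_i=\psi_{i,j}+\psi_{i,k}-\psi_{j,k}$ --- is where a genuine gap enters. It forces you to use \emph{arbitrary} pairs $(f_i,f_j)$, and for $\psi_{i,j}$ to be bushy you need $h_i+h_j$ to be positive on both $f_i$ and $f_j$; for $j>i+1$ this requires knowing that $h_i$ vanishes (or is at least nonnegative) on $\langle f_j\rangle$. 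You assert that $a_i$ can be chosen so that $h_{f_i^{a_i}}$ vanishes on $\langle f_j\rangle$ for \emph{every} $j\neq i$, but Proposition \ref{Prop5: BF} only provides, for each individual $f'$, a threshold on $a$ depending on $f'$: one can impose only finitely many such vanishing conditions per index, which is exactly why the paper (following Bestvina--Fujiwara) asks for vanishing only on $\langle f_j\rangle$ with $j<i$ together with $j=i+1$. The step is easily repaired --- run your identity only on triples $(i,i+1,i+2)$, which costs two extra vanishing conditions per index --- but as written it is unjustified, and in any case it is not needed: the consecutive sums $[\phi_i+\phi_{i+1}]$ are already linearly independent.

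A separate caution, which applies equally to the paper's own one-line proof and to your last paragraph: countably many linearly independent classes $[\phi_i]$ (or $[\psi_{i,j}]$) span only a countably-dimensional subspace. The continuum dimension in Bestvina--Fujiwara comes from the $\ell^1$-combinations $\sum_i t_ih_i$, which are quasicharacters with uniformly bounded defect. To conclude that the span of the \emph{bushy} pseudocharacters has continuum dimension one should observe that the homogenization of $\sum_i t_ih_i$ is $\sum_i t_i\phi_i$, that this is bushy whenever $t_1,t_2>0$ (it is bornologous by Lemma \ref{lema: bounded} applied termwise, and positive on $f_1$ and $f_2$ by the vanishing conditions), and that every $\ell^1$-combination is a difference of two such. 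Neither you nor the paper spells this out, but without it the stated dimension does not follow.
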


\begin{cor} If a Cayley graph $X=\Gamma(G,S)$ satisfies the bottleneck property and the canonical action of the group is a Bestvina-Fujiwara action, then the dimension of the subspace generated by the bushy pseudocharacters as a vector space over $\br$ is the cardinal of the continuum. 
\end{cor}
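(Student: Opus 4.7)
The plan is to apply the preceding corollary directly; the only work is to check that its hypotheses follow from those of the current statement. First I would observe that the Cayley graph $X=\Gamma(G,S)$, equipped with the simplicial metric assigning length $1$ to each edge, is a geodesic metric space on which $G$ acts by isometries via left multiplication. This is standard and requires no argument beyond the definition of a Cayley graph.

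The second step is to verify Gromov hyperbolicity of $X$. By Theorem \ref{Bottleneck}, the bottleneck property on a geodesic metric space is equivalent to being quasi-isometric to some simplicial tree. Simplicial trees are $0$-hyperbolic, and Gromov hyperbolicity is a quasi-isometry invariant within the class of geodesic metric spaces, so $X$ is itself Gromov hyperbolic. Thus $X$ is a geodesic Gromov hyperbolic graph satisfying the bottleneck property.

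With these observations in place, all hypotheses of the preceding corollary hold for the canonical action of $G$ on $X$: the action is by isometries on a geodesic Gromov hyperbolic graph satisfying the bottleneck property, and it is assumed to be a Bestvina-Fujiwara action. Invoking the previous corollary immediately yields that the subspace generated by the bushy pseudocharacters has dimension equal to the cardinality of the continuum. There is no real obstacle here—the statement is a direct specialization of the previous corollary to the case where the hyperbolic graph is the Cayley graph itself and the action is the canonical one.
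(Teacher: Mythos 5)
Your proposal is correct and matches the paper's treatment: the paper states this corollary without proof as an immediate specialization of the preceding corollary to the case $X=\Gamma(G,S)$ with the canonical left-multiplication action, and your verification that the Cayley graph is a geodesic graph on which $G$ acts by isometries and that the bottleneck property forces Gromov hyperbolicity (via Theorem \ref{Bottleneck} and quasi-isometry invariance of hyperbolicity for geodesic spaces) supplies exactly the routine checks the paper leaves implicit.
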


\begin{cor} If Manning's action is a Bestvina-Fujiwara action, then the dimension of the subspace generated by the bushy pseudocharacters on $G$ as a vector space over $\br$ is the cardinal of the continuum. 
\end{cor}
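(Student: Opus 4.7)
The plan is to obtain this statement as a direct application of the first corollary of Section 5, which asserts that a Bestvina--Fujiwara action on a Gromov hyperbolic graph satisfying the Bottleneck Property gives rise to a continuum-dimensional subspace of bushy pseudocharacters. I would first verify that Manning's construction produces an ambient space of exactly the required type, and then feed it into that corollary.

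More precisely, I would begin by recalling the construction of Manning's action: starting from any pseudocharacter $f\co G\to\br$, Manning builds a simplicial graph $X$ with vertex set $G\times V$ together with a natural $G$-action extending $g(g_0,\tau_0)=(gg_0,\tau_0)$. By Theorem 4.15 of \cite{Man}, quoted above, this graph $X$ satisfies the Bottleneck Property, and hence by Theorem \ref{Bottleneck} it is a quasi-tree; in particular $X$ is a geodesic Gromov hyperbolic graph on which $G$ acts by isometries.

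Next, the hypothesis to be used is precisely that Manning's action on this $X$ is a Bestvina--Fujiwara action, i.e.\ nonelementary and containing independent hyperbolic elements $g_1,g_2\in G$ with $g_1\not\sim g_2$ in the sense of Definition \ref{def: sim}. These are exactly the hypotheses of the first corollary of Section 5 (the one stating that for such an action the dimension of the subspace generated by bushy pseudocharacters is the cardinal of the continuum). Applying that corollary to $G$ acting on Manning's graph $X$ immediately yields the conclusion.

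There is essentially no obstacle here beyond the bookkeeping of checking that Manning's graph meets the hypotheses of the earlier corollary; the substantive work has already been carried out in Theorem \ref{Tma: bushy} (providing one bushy pseudocharacter from any Bestvina--Fujiwara action on a quasi-tree, using Propositions \ref{Prop2: BF} and \ref{Prop5: BF}) and in the preceding corollary of this section (upgrading this to a continuum-dimensional subspace by running the construction on each consecutive pair $f_i,f_{i+1}$ from Proposition \ref{Prop2: BF} and invoking the linear independence argument of \cite[Theorem 1]{BF}). Thus the proof reduces to one sentence: Manning's graph is a quasi-tree by \cite[Theorem 4.15]{Man}, so the previous corollary applies and gives the continuum-dimensional subspace of bushy pseudocharacters on $G$.
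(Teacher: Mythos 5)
Your proposal is correct and follows exactly the route the paper intends: Manning's graph $X$ satisfies the Bottleneck Property by \cite[Theorem 4.15]{Man}, so it is a geodesic Gromov hyperbolic graph meeting the hypotheses of the first corollary of Section 5, which then delivers the continuum-dimensional subspace of bushy pseudocharacters. The paper treats this corollary as an immediate specialization in precisely the same way, so there is nothing to add.
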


In particular,

\begin{cor}\label{Cor: bushy} If there is a bushy pseudocharacter $h\co G \to \br$ then, the dimension of the subspace generated by the bushy pseudocharacters on $G$ as a vector space over $\br$ is the cardinal of the continuum. 
\end{cor}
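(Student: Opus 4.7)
The plan is to deduce this corollary by chaining together two results already established in the paper, so that no new construction is required. The hypothesis gives us a bushy pseudocharacter $h\co G\to \br$, and the goal is to produce a continuum of linearly independent bushy pseudocharacters.

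First I would invoke Proposition \ref{Prop: 4.27} (Manning's Proposition 4.27): since $h$ is bushy, Manning's action of $G$ on the graph $X$ constructed from $h$ (with vertex set $G\times V$ and edges as described in Section 4) is a Bestvina-Fujiwara action. By Manning's Theorem 4.15 quoted above, this graph $X$ satisfies the Bottleneck Property, so $X$ is a quasi-tree in the sense used throughout the paper.

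Next I would apply the immediately preceding corollary (``If Manning's action is a Bestvina-Fujiwara action, then the dimension of the subspace generated by the bushy pseudocharacters as a vector space over $\br$ is the cardinal of the continuum''). That corollary in turn rests on running the construction of Theorem \ref{Tma: bushy} on each consecutive pair $f_i,f_{i+1}$ in the Bestvina--Fujiwara sequence from Proposition \ref{Prop2: BF}, producing, for each $i$, a bushy pseudocharacter at bounded distance from $h_i+h_{i+1}$, and then invoking the Bestvina--Fujiwara linear independence of the classes $[h_i]\in QH(G)$ to extract a continuum of linearly independent bushy pseudocharacters. Combining these two steps yields exactly the conclusion of Corollary \ref{Cor: bushy}.

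The only step that really needs checking is the first one, i.e.\ that the hypotheses of Proposition \ref{Prop: 4.27} are met; but this is precisely the assumption that $h$ is bushy. The second step is already stated as a corollary in this section, so it applies verbatim. Hence no genuine obstacle arises: the corollary is a direct consequence of Proposition \ref{Prop: 4.27} followed by the previous corollary, and the proof reduces to citing these two results in order.
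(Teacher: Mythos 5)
Your proposal is correct and follows exactly the route the paper intends: the ``In particular'' preceding Corollary \ref{Cor: bushy} signals that it is obtained by combining Proposition \ref{Prop: 4.27} (a bushy pseudocharacter makes Manning's action a Bestvina--Fujiwara action on a quasi-tree) with the immediately preceding corollary. Nothing further is needed.
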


This, together with Corolaries \ref{Cor: nonelementary0} and \ref{Cor: nonelementary} yields

\begin{cor} Consider a nonelementary action of a group $G$ on a quasi-tree $X$ and a nonelementary pseudocharacter $h\co G \to \br$. Then, if $h$ is bornologous on the action, the dimension of the subspace generated by the bushy pseudocharacters on $G$ (in particular, the dimension of $H_b^2(G;\br)$) as a vector space over $\br$ is the cardinal of the continuum. 
\end{cor}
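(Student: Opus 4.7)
The proof plan is essentially a chain of two previously established implications, so there is little to invent; the work is in recognizing that the hypotheses line up perfectly.

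First, I would invoke Corollary \ref{Cor: nonelementary0}, whose hypotheses are exactly the hypotheses of our statement: a nonelementary action of $G$ on the quasi-tree $X$, a nonelementary pseudocharacter $h\co G\to\br$, and the assumption that $h$ is bornologous on the action. That corollary directly concludes that $h$ is a \emph{bushy} pseudocharacter on $G$. This is the step where all of the geometric content is hidden, as Corollary \ref{Cor: nonelementary0} ultimately rests on Proposition \ref{Prop: bornologous} applied to two independent hyperbolic elements $g_1,g_2$ with $h(g_1),h(g_2)\neq 0$ (which exist precisely because $h$ is nonelementary, after possibly replacing $g_i$ by $g_i^{-1}$ to arrange positive signs), together with Lemma \ref{lema: divergent} to guarantee $d_H(\Gamma_1,\Gamma_2)=\infty$.

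Second, once we know that $G$ admits at least one bushy pseudocharacter, Corollary \ref{Cor: bushy} immediately upgrades this single bushy pseudocharacter to a continuum-dimensional subspace of bushy pseudocharacters inside $QH(G)$. The parenthetical remark about $H_b^2(G;\br)$ is then automatic from the exact sequence
\[
0\to H^1(G;\br)\to QH(G)\to H_b^2(G;\br)\to H^2(G;\br)
\]
recalled just before Corollary \ref{Cor: bushy}: since the subspace generated by bushy pseudocharacters has dimension the cardinal of the continuum in $QH(G)$, and $H^1(G;\br)$ has countable dimension for a finitely presented group, its image in $H_b^2(G;\br)$ still has dimension the continuum.

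The only possible obstacle is purely bookkeeping, namely checking that ``nonelementary pseudocharacter'' in the sense used by Corollary \ref{Cor: nonelementary0} really does supply a pair of independent hyperbolic elements on which $h$ takes the same (positive) sign, as required by Proposition \ref{Prop: bornologous}; but this is handled inside the proof of Corollary \ref{Cor: nonelementary0} itself, by replacing $g_i$ with $g_i^{-1}$ if necessary, so no additional work is required here. Hence the proof reduces to the single line: apply Corollary \ref{Cor: nonelementary0} followed by Corollary \ref{Cor: bushy}.
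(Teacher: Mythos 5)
Your proposal is correct and matches the paper's own argument exactly: the paper derives this corollary by combining Corollary \ref{Cor: nonelementary0} (which gives that $h$ is bushy under these hypotheses) with Corollary \ref{Cor: bushy} (which upgrades one bushy pseudocharacter to a continuum-dimensional subspace). Your additional remarks on the exact sequence and the sign bookkeeping are consistent with what the paper leaves implicit.
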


\begin{cor} Consider a nonelementary action of a group $G$ on a quasi-tree $X$. If the action is metrically proper and there exist a nonelementary pseudocharacter then the dimension of the subspace generated by the bushy pseudocharacters on $G$ (in particular, the dimension of $H_b^2(G;\br)$) as a vector space over $\br$ is the cardinal of the continuum. 
\end{cor}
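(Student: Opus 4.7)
The plan is to chain together the two immediately preceding results. First, I would invoke Corollary \ref{Cor: nonelementary}: under the hypotheses (nonelementary action on a quasi-tree, metrically proper), any nonelementary pseudocharacter on $G$ is automatically bushy. Since the assumption of the corollary provides such a nonelementary pseudocharacter $h$, this step produces at least one bushy pseudocharacter on $G$.

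Once a single bushy pseudocharacter is in hand, I would apply Corollary \ref{Cor: bushy}, which states precisely that the existence of one bushy pseudocharacter forces the subspace of $QH(G)$ generated by bushy pseudocharacters to have dimension equal to the cardinal of the continuum. This finishes the main claim.

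For the parenthetical assertion about $H_b^2(G;\br)$, I would appeal to the exact sequence
\[
0\to H^1(G;\br)\to QH(G)\to H_b^2(G;\br)\to H^2(G;\br)
\]
recalled in the preceding section: the bushy pseudocharacters used in Corollary \ref{Cor: bushy} are precisely those built (up to a bounded perturbation) from the Bestvina--Fujiwara quasicharacters $h_i$, whose classes $[h_i]$ are already known to be linearly independent in $QH(G)$ and, by the argument of \cite{BF}, map to linearly independent classes in $H_b^2(G;\br)$. Hence the image of this continuum-dimensional subspace in $H_b^2(G;\br)$ is still of dimension the continuum.

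There is essentially no obstacle here, since both ingredients have already been proved in the paper; the only thing to be careful about is making sure that the bushy pseudocharacters produced in Corollary \ref{Cor: bushy} are genuinely independent modulo $H^1(G;\br)$ so that the conclusion about $H_b^2(G;\br)$ is not vacuous, but this is handled by the Bestvina--Fujiwara linear independence argument already cited in the discussion preceding Corollary \ref{Cor: bushy}.
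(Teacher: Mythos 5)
Your proposal is correct and follows exactly the paper's route: the paper obtains this corollary by combining Corollary \ref{Cor: nonelementary} (a metrically proper nonelementary action forces any nonelementary pseudocharacter to be bushy) with Corollary \ref{Cor: bushy} (one bushy pseudocharacter yields a continuum-dimensional space of them), and your remark on passing to $H_b^2(G;\br)$ via the exact sequence and the Bestvina--Fujiwara independence argument matches the discussion the paper relies on.
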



\begin{thebibliography}{99}

\bibitem{BF} M. Bestvina and K. Fujiwara \emph{Bounded cohomology of subgroups of mapping class groups}. Geometry and Topology \textbf{6} (2002) 69--89.


\bibitem{Bow} B. H. Bowditch. Notes on Gromov's hyperbolicity criterion. \emph{Group theory from a geometric viewpoint}. World Scientific, New Jersey, (1991) 64--167.


\bibitem{D1} M. Cencelj, J. Dydak, Z. Vavpeti\v{c} and \v{Z}. Virk. \emph{A combinatorial approach to Coarse Geometry}. arXiv:0906.1372v1. 


\bibitem{EF} D. B. A. Epstein and K. Fujiwara. \emph{The second bounded cohomology of word-hyperbolic groups}, Topology \textbf{36} (1997) 1275--1289.

\bibitem{F1} K. Fujiwara. \emph{The second bounded cohomology of a group acting on a Gromov-hyperbolic space}. Proc.  London Math. Soc. \textbf{3}, 76 (1998) 70--94.

\bibitem{F2} K. Fujiwara. \emph{The second bounded cohomology of an amalgamated free product of groups}. Trans. Amer. Math. Soc. \textbf{352}, 76 (2000) 1113--1129.

\bibitem{F-W} K. Fujiwara and K. Whyte. \emph{A note on spaces of asymptotic dimension one}. Algebraic and Geometric Topology \textbf{7} (2007) 1063-1070.

\bibitem{Gr82} M. Gromov. \emph{Volume and bounded cohomology}. Inst. Hautes Études Sci. Publ. Math. \textbf{56} (1982) 5--99.

\bibitem{Gr} M. Gromov. Hyperbolic groups, in \emph{Essays in group
theory.} Math. Sci. Res. Inst. Publ. 8, Springer-Verlag, New York,
(1987) 75--263.

\bibitem{Hei} J. Heinonen. \emph{Lectures on analysis on metric spaces}. Universitext. Springer-Verlag, New
York, 2001.

\bibitem{Iva} N. V. Ivanov. \emph{Foundations of the theory of bounded cohomology}. Zap. Nauchn. Sem. Leningrad. Otdel. Mat. Inst. Steklov. \textbf{143} (1985) 69--109. 

\bibitem{Man} J. F. Manning. 
\emph{Geometry of pseudocharacters.}
Geometry and Topology. \textbf{9}, (2005) 1147--1185.

\bibitem{Man2} J. F. Manning. 
\emph{Quasi-actions on trees and property (QFA)}. J. London Math. Soc. (2) \textbf{73}, (2006) 84--108.

\bibitem{M-P} A. Martínez-Pérez. \emph{Real valued functions and metric spaces quasi-isometric to trees.} Preprint.

\bibitem{MSW} L. Mosher, M. Sageev, K. Whyte. \emph{Quasi-actions on
trees I. Bounded valence.} Annals of Mathematics, \textbf{158},
(2003), pp 115--164.

\end{thebibliography}
\end{document}